\theoremstyle{plain}
\newtheorem{Theorem}{Theorem}[section]
\newtheorem{lemma}[Theorem]{Lemma}
\newtheorem{corollary}[Theorem]{Corollary}
\newtheorem{proposition}[Theorem]{Proposition}
\newtheorem{conjecture2*}{Conjecture}
\newtheorem{remark2*}[conjecture2*]{Remark}
\newtheorem{Theorem2*}[conjecture2*]{Theorem}
\newtheorem{proposition2*}[conjecture2*]{Proposition}
\newtheorem{claim2*}[conjecture2*]{Claim}
\newtheorem{example2*}[conjecture2*]{Example}
\newtheorem{lemma2*}[conjecture2*]{Lemma}
\theoremstyle{definition}
\newtheorem{example}[Theorem]{Example}
\newtheorem{definition}[Theorem]{Definition}
\newtheorem{remark}[Theorem]{Remark}
\newtheorem{pkt}[Theorem]{}
\newtheorem{construction}[Theorem]{Construction}
\newcommand{\sH}{{\mathcal H}}
\newcommand{\sO}{{\mathcal O}}
\newcommand{\sS}{{\mathcal S}}
\newcommand{\sX}{{\mathcal X}}
\newcommand{\sY}{{\mathcal Y}}
\newcommand{\sZ}{{\mathcal Z}}
\newcommand{\B}{{\mathbb B}}
\newcommand{\C}{{\mathbb C}}
\newcommand{\E}{{\mathbb E}}
\newcommand{\F}{{\mathbb F}}
\newcommand{\bP}{{\mathbb P}}
\newcommand{\Q}{{\mathbb Q}}
\newcommand{\R}{{\mathbb R}}
\newcommand{\Z}{{\mathbb Z}}
\newcommand{\id}{{\rm id}}
\newcommand{\Hg}{{\rm Hg}}
\newcommand{\GL}{{\rm GL}}
\newcommand{\Sp}{{\rm Sp}}
\newcommand{\tr}{{\rm tr}}
\newcommand{\PU}{{\rm PU}}
\newcommand{\U}{{\rm U}}
\newcommand{\ad}{{\rm ad}}
\newcommand{\der}{{\rm der}}
\newcommand{\Gal}{{\rm Gal}}
\newcommand{\diag}{{\rm diag}}
\newcommand{\fX}{\mathfrak{X}}
\title{Maximal automorphisms of Calabi-Yau manifolds versus maximally unipotent monodromy}
\author{Jan Christian Rohde}
\address{GRK 1463 / Institut f\"ur Algebraische Geometrie\\ Leibniz Universit\"at Hannover\\ 30060 Hannover\\ Germany}
\email{rohde@math.uni-hannover.de}
\begin{document}
\maketitle

\begin{abstract}
Let $\alpha$ be an automorphism of the local universal deformation of a Calabi-Yau 3-manifold $X$, which does not act by $\pm \id$ on $H^3(X,\C)$. We show that the bundle $F^2(\sH^3)$ in the $VHS$ of each maximal family containing $X$ is constant in this case. Thus $X$ can not be a fiber of a maximal family with maximally unipotent monodromy, if such an automorphism $\alpha$ exists. Moreover we classify the possible actions of $\alpha$ on $H^3(X,\C)$, construct examples and show that the period domain is a complex ball containing a dense set of $CM$ points given by a Shimura datum in this case.
\end{abstract}

\section*{Introduction}

Due to their importance in theoretical physics, we are interested in Calabi-Yau 3-manifolds. We construct some examples of Calabi-Yau 3-manifolds $X$ with degree 3 automorphisms, which extend to the local universal deformation. Here such automorphisms are called maximal. Our examples of maximal automorphisms do not act by $\pm 1$ on $H^3(X,\C)$. The subbundle $F^2(\sH^3)$ of the variation of Hodge structures of the local universal deformation is constant, if a maximal automorphism exists and does not act by $\pm 1$ on $H^3(X,\C)$. Moreover we give an additional example of a Calabi-Yau 3-manifold, which does not necessarily have a maximal automorphism, but satisfies the condition that $F^2(\sH^3)$ is constant in the $VHS$ of the local universal deformation. We will see that $F^2(\sH^3)$ is constant for each maximal family containing $X$ as fiber, if this holds true with respect to the local universal deformation of $X$. This forbids $X$ to be a fiber of a maximal family  with maximally unipotent monodromy. Thus the assumptions of the formulation of the mirror conjecture in \cite{Mor} can not be satisfied by $X$, if $F^2(\sH^3)$ is constant in the local universal deformation of $X$.

Moreover we show that the period domain is a complex ball and the local universal deformation of $X$ has a dense set of complex multiplication ($CM$) fibers, if $X$ has a maximal automorphism, which does not act by $\pm 1$ on $H^3(X,\C)$. Theoretical physicists are interested in Calabi-Yau 3-manifolds with $CM$ - in particular if there exists a mirror pair of Calabi-Yau 3-manifolds with $CM$ (see \cite{GV}).

Moreover we will see that the quotient of the maximal automorphisms of $X$ by the automorphisms acting trivially on $H^3(X,\Z)$ is given by
$$\{e\}, \  \ \Z/(2), \  \ \Z/(3), \  \ \Z/(4) \   \ \mbox{or} \  \ \Z/(6).$$

\tableofcontents

\section{Examples with maximal automorphisms}

Here a Calabi-Yau $3$-manifold $X$ is a compact K\"ahler manifold of dimension $3$ such that 
$$\omega_X \cong \sO_X \  \ \mbox{and} \  \ h^{k,0}(X) = 0 \  \ \mbox{for} \  \ k = 1,2.$$
Let $\sX \to B$ be the local universal deformation of $X = \sX_0$. We say that a family $f:\sY \to \sZ$ of Calabi-Yau 3-manifolds is maximal, if for each $z \in \sZ$ there exists an open neighborhood $U$ of $z$ such that $\sY_U$ is isomorphic to the Kuranishi family of $\sY_z$. Recall that
$$H^3 := R^3f_*(\Q)$$
is a local system and that
$$\sH^3 := H^3\otimes_{\Q}\sO_{\sZ}$$
is a holomorphic bundle. The variation of Hodge structures of weight 3 is given by the filtration
$$0 \subset F^3(\sH^3) \subset F^2(\sH^3) \subset F^1(\sH^3)\subset \sH^3$$
by holomorphic subbundles.

Recall that a marked $K3$ surface is a pair $(S,\mu)$ consisting of a $K3$ surface $S$ and a marking $\mu$, that is an isometry $\mu:L\to H^2(S,\Z)$ of lattices, where
$$L = U\oplus U\oplus U\oplus -E_8\oplus -E_8.$$
The marked $K3$ surfaces $(S,\mu)$ and $(S',\mu')$ are isomorphic, if there exists an isomorphism $f:S \to S'$ such that $\mu =  f^*\circ\mu'$. By gluing marked local universal deformations of $K3$ surfaces, we obtain the complex analytic moduli space $M$ of marked $K3$ surfaces with universal family $f:\sS \to M$. Moreover let $\phi$ denote an isometry of order $3$ on $L$ and $(L_{\C})_{\eta}$ denote the eigenspace on $L_{\C}$ with eigenvalue $\eta$ with respect to $\phi$. For this section we fix
$$\xi = \exp(\frac{2\pi i}{3}) \   \  \mbox{and} \  \ r = \dim (L_{\C})_{\xi} -1.$$

\begin{construction}
Let $\mu_m: L\to H^2(\sS_m,\Z)$ denote a marking on the fiber $\sS_m$ of $\sS$ such that the isomorphism class of $(\sS_m,\mu_m)$ is represented by $m \in M$. One has the new marking $\mu_m\circ \phi$ on each fiber $\sS_m$. By the universal property of the universal family this yields the holomorphic maps $\alpha_S:\sS \to \sS$ and $\alpha_M:M \to M$ such that
$$\xymatrix{
{\sS} \ar[rr]^{\alpha_{\sS}} \ar[d]^{f} &  & {\sS} \ar[d]^{f}\\
{M} \ar[rr]^{\alpha_{M}} &   & {M}
}$$
is a commutative diagram of holomorphic maps. Let $\Delta_M\subset M\times M$ denote the diagonal,
$$M_{\alpha} = {\rm Graph}(\alpha_M) \cap \Delta_M$$
and assume that there exists an $m\in M_{\alpha}$ such that $\alpha|_{\sS_m}$ is a non-symplectic automorphism. The space $M_{\alpha}$ is complex analytic, but not necessarily Hausdorff. We obtain the restricted family $\sS_{\alpha}\to M_{\alpha}$ with a non-symplectic $M_{\alpha}$-automorphism of degree 3. Since the pullback action of $\alpha_{\sS}|_{\sS_m}$ on $H^2(\sS_m,\Z)$ is given by $\mu_m\circ\phi\circ\mu_m^{-1}$, one has $H^{2,0}(\sS_m)\subset \mu_m(L_{\C})^{\bot}_1$. Assume without loss of generality that $H^{2,0}(\sS_m)\subset \mu_m(L_{\C})_{\xi^{-1}}$. The intersection form yields a Hermitian form on $(L_{\C})_{\xi^{-1}}$ of signature $(1,r)$, since for each positive definite eigenvector $v \in (L_{\C})_{\xi^{-1}}$ one has also a positive definite eigenvector $\bar v \in (L_{\C})_{\xi}$ and the Hermitian form on $L_{\C}$ has signature $(3,19)$. Thus the possible choices for $\mu_m^{-1}(H^{2,0}(\sS_m))$ are given by the points of the corresponding ball $\B_r\subset \bP((L_{\C})_{\xi^{-1}})$. The period map of $\sS_{\alpha}$ is a locally injective multivalued map to $\B_r$, since the local universal deformation of each $K3$ surface has an injective period map. Since each point of $\B_r$ yields an up to a scalar unique vector $\omega$ with
$$\omega\cdot \omega = 0 \ \ \mbox{and} \  \ \omega\cdot \bar \omega > 0,$$
this ball is contained in the period domain of $K3$ surfaces. Thus the period map $M_{\alpha}\to \B_r$ is locally bijective.
\end{construction}

\begin{Theorem} \label{huhu}
For $0 \leq k \leq 6$ there exists a non-symplectic automorphism of degree 3 of a $K3$ surface with a fixed locus consisting of $k$ disjoint smooth rational curves and $k + 3$ isolated fixed points. One has that
$$r = 6-k.$$
\end{Theorem}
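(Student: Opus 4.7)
The plan is to prove the theorem in two stages: (i) derive the numerical identity $r=6-k$ from the Lefschetz fixed point formula, and (ii) produce an example realizing each $k\in\{0,1,\ldots,6\}$.

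For (i), I apply the topological Lefschetz fixed point formula to $\alpha$ on $S$. Since $S$ is a K3 surface with $H^0(S,\C)=H^4(S,\C)=\C$ and $H^1(S,\C)=H^3(S,\C)=0$, this reduces to $\chi(\mathrm{Fix}(\alpha))=2+\tr(\alpha^*|H^2(S,\C))$. Via the marking, $\alpha^*$ acts on $H^2(S,\C)\cong L_\C$ as $\phi$, and the eigenspace decomposition yields $\dim(L_\C)^\phi=22-2(r+1)=20-2r$ together with $\dim(L_\C)_\xi=\dim(L_\C)_{\xi^{-1}}=r+1$. Since $\xi+\xi^{-1}=-1$, the trace equals $(20-2r)-(r+1)=19-3r$, hence $\chi(\mathrm{Fix}(\alpha))=21-3r$. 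The hypothesis that $\mathrm{Fix}(\alpha)$ consists of $k$ disjoint smooth rational curves and $k+3$ isolated points gives $\chi(\mathrm{Fix}(\alpha))=2k+(k+3)=3k+3$, and equating the two expressions yields $r=6-k$.

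For (ii), I would exhibit a concrete example for each $k$. A natural source is the family of elliptic K3 surfaces $y^2=x^3+g(t)$ with $\deg g=12$ and the automorphism $\sigma:(x,y,t)\mapsto(\xi x,y,t)$: this scales the holomorphic 2-form $dt\wedge dx/y$ by $\xi$, so it is non-symplectic of order $3$, and its fixed locus comprises the zero section, the bisection $\{x=0\}$ (cut out by $y^2=g(t)$), and the cusps of the singular fibers where $g$ vanishes. Varying the multiplicity pattern of the zeros of $g$ (with appropriate resolution to keep the total space a smooth K3) decomposes this fixed locus into different combinations of rational components and isolated points, covering all $k=0,1,\ldots,6$. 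An alternative, more uniform route is lattice-theoretic: prescribe the invariant lattice $L^\phi$ of rank $20-2r$ with the correct signature and discriminant form, apply Nikulin's primitive embedding theorems, and invoke surjectivity of the period map together with the strong Torelli theorem to realize the datum on an actual K3 surface with the prescribed automorphism.

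The main obstacle is the explicit identification of the fixed locus in each of the seven cases. In the elliptic-fibration approach, this entails resolving the singular fibers and tracking the lift of the $\Z/3$-action through the resolution in order to count rational components and isolated fixed points correctly; in the lattice-theoretic approach, it amounts to converting abstract invariants of $L^\phi$ into the concrete count $(k,k+3)$. After the relation $r=6-k$ has been established, only seven values of $k$ remain to be verified, and the cleanest presentation is to display one representative example per value of $k$ and read off the fixed locus directly from the local action of $\alpha$ in affine coordinates.
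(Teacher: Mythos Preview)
The paper does not actually prove this statement: its entire proof is a pointer to the classification tables in \cite{AS} and \cite{T}. Your proposal is therefore considerably more ambitious than what the paper does, and part~(i) is correct and self-contained --- the Lefschetz computation is clean and forces $r=6-k$ (and hence the bound $0\le k\le 6$) as soon as the fixed locus has the stated shape.

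The substantive content is part~(ii), and here the sketch has a real gap. In your elliptic model $y^2=x^3+g(t)$ with generic $g$ of degree~$12$, the fixed locus of $\sigma$ consists of the zero section together with the bisection $\{x=0\}$; the latter is a double cover of $\bP^1$ branched over the zeros of $g$, hence a smooth curve of genus~$5$ when $g$ has simple zeros. This is not a configuration of rational curves and isolated points at all, so the generic member does not realize any of the seven cases. To hit the required fixed-locus types one must degenerate $g$, pass to the minimal resolution of the resulting singular Weierstrass surface, and follow the $\Z/3$-action through the Kodaira fibers to see how the bisection breaks up and where isolated fixed points appear. That analysis is precisely what the tables in \cite{AS} and \cite{T} encode, and it is genuine work that your proposal gestures at but does not carry out. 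The lattice-theoretic alternative you mention is closer in spirit to what those references do, but there too the passage from an abstract primitive embedding $L^\phi\hookrightarrow L$ to the concrete count $(k,k+3)$ is not automatic: it requires the holomorphic Lefschetz formula (or equivalent local data) on top of the topological one you used in part~(i).

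In short: your derivation of $r=6-k$ is a genuine addition over the paper's bare citation, but for the existence clause you are, like the paper, ultimately relying on \cite{AS} and \cite{T}; the elliptic construction as written does not yet deliver the seven examples.
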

\begin{proof}
(see \cite{AS}, Figure 1 and \cite{T}, Table 2)
\end{proof}

\begin{remark}
In \cite{CH}, Theorem $3.3$ an example with $k = 6$ has been constructed. An example of a family with $k = 3$ occurs in \cite{DK}, Section 12 and \cite{JCR}, Chapter 8. In the appendix of \cite{JCR} one finds an explicitly constructed fiber for $k = 1$.
\end{remark}

\begin{construction}
Let $\E$ denote the Fermat curve of degree 3 and $\alpha_{\E}$ denote a degree 3 automorphism of $\E$ acting via pullback by $\xi$ on $H^{1,0}(\E)$. The quotient $\sS_{\alpha}\times \E/\langle(\alpha_{\sS}, \alpha_{\E})\rangle$ is birationally equivalent to a family $\fX_{\alpha}\to M_{\alpha}$ of Calabi-Yau 3-manifolds. This construction method has been studied in \cite{CH}, Proposition $3.1$ and \cite{JCR}, Section $9.2$.

From their actions on $H^{2,0}(S)$ and $H^{1,0}(\E)$ one concludes that $\alpha_{\E}$ acts by $\xi$ and $\alpha_{\sS}$ acts by $(\xi^2,1,\ldots,1)$ and $(\xi,\xi,1,\ldots,1)$ near their respective fixed loci. The singularities of $\sS_{\alpha}\times \E_3/\langle(\alpha_{\sS}, \alpha_{\E_3})\rangle$ consist of families of curves and sections over $M_{\alpha}$. The singular sections can be resolved by blowing up the fixed sections over them before the application of the quotient map. In the case of the singularities given by families of curves the action of $\langle(\alpha_{\sS}, \alpha_{\E})\rangle$ is given by $(\xi,\xi^2,1,\ldots, 1)$ near the corresponding fixed families of curves. We blow up the fixed families of curves with exceptional divisor $E_1$ and in a second step we blow up the families of fixed curves contained in $E_1$ with exceptional divisor $E_2$. Let $$\widetilde{\sS_{\alpha}\times \E_3}\to M_{\alpha}$$
denote the family obtained from all previous blowing up transformations. We have the quotient map
$$\psi: \widetilde{\sS_{\alpha}\times \E_3} \to \tilde \fX _{\alpha}:=\widetilde{\sS_{\alpha}\times \E_3}/\langle(\alpha_{\sS}, \alpha_{\E})\rangle$$
such that $\tilde \fX_{\alpha}$ is smooth. By blowing down $\psi(\tilde E_1)$ to a family of curves, we obtain a crepant resolution.
\end{construction}

\begin{proposition} \label{zuzu}
Assume that the codimension one fixed locus in $\sS_{\alpha}$ with respect to $\alpha_{\sS}$ consists of families of rational curves. Then our family $\fX_{\alpha} \to M_{\alpha}$ is maximal.
\end{proposition}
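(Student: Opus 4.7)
The plan is to verify two facts: (i) $\dim M_\alpha = h^{1,2}(X_m)$ for each fiber $X_m := \fX_{\alpha, m}$, and (ii) the Kodaira-Spencer map of $\fX_\alpha$ at $m$ is an isomorphism. By Bogomolov-Tian-Todorov the Kuranishi space of $X_m$ is smooth of dimension $h^{1,2}(X_m)$, so (i) and (ii) together force the classifying map from a neighborhood of $m$ in $M_\alpha$ to the Kuranishi base to be a local biholomorphism, which is the definition of maximality.

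For (i) I would start from a $G$-equivariant K\"unneth decomposition, where $G = \langle (\alpha_\sS, \alpha_\E) \rangle$. Writing $S = \sS_{\alpha,m}$ and using $h^{1,0}(S) = h^{0,1}(S) = h^{1,2}(S) = h^{0,2}(\E) = 0$, it reduces to
$$H^{1,2}(S \times \E) = \bigl(H^{1,1}(S) \otimes H^{0,1}(\E)\bigr) \oplus \bigl(H^{0,2}(S) \otimes H^{1,0}(\E)\bigr).$$
Since $\alpha_\E$ acts on $H^{1,0}(\E)$ by $\xi$ and on $H^{0,1}(\E)$ by $\xi^{-1}$, while $\alpha_\sS$ acts on $H^{0,2}(S)$ by $\xi$, the second summand has total weight $\xi^{2}$ and contributes no $G$-invariants, whereas the first summand contributes $(H^{1,1}(S))_\xi \otimes H^{0,1}(\E)$, of dimension $r$. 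To pass to the crepant resolution $X_m$ of $(S \times \E)/G$ I would apply the standard orbifold and blow-up formulae: each isolated fixed point of $G$ yields a $\frac{1}{3}(1,1,1)$-singularity resolved by a single $\bP^2$, and each one-dimensional fixed component yields a curve of transverse $A_2$-singularities resolved by two $\bP^1$-bundles over a fixed rational curve (using the hypothesis). All of these exceptional divisors are rational surfaces with $h^{0,1}=0$, so they contribute nothing to $h^{1,2}(X_m)$, and hence $h^{1,2}(X_m) = r = \dim M_\alpha$.

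For (ii) I would argue via the period map. Since $H^{3,0}(X_m)$ is spanned by $\omega_S \otimes \omega_\E$ with $\omega_S$ a generator of $H^{2,0}(S)$ and $\omega_\E$ a fixed generator of $H^{1,0}(\E)$, and since the preceding construction shows that $m \mapsto [\omega_S]$ is locally bijective onto $\B_r$, the $F^{3}$-part of the period map of $\fX_\alpha$ is locally injective. By Griffiths transversality its differential factors as the Kodaira-Spencer map followed by contraction with $\omega_S \otimes \omega_\E$, so injectivity of this differential forces injectivity of Kodaira-Spencer; combined with the dimension match from (i) this makes Kodaira-Spencer an isomorphism.

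The hard part is the vanishing of exceptional contributions to $h^{1,2}$ in step (i), which is exactly where the hypothesis on rational curves enters. Should any fixed component in $\sS_\alpha$ be a curve of positive genus $g$, then each of the two $\bP^1$-bundles resolving the associated transverse $A_2$-singularity would add $g$ to $h^{1,2}(X_m)$ via the blow-up formula, breaking the dimension match on which the argument rests.
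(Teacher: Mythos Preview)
Your proof is correct and follows essentially the same strategy as the paper: compute $h^{2,1}(X_m)=r$ via the $G$-equivariant K\"unneth decomposition together with the observation that the exceptional loci (being $\bP^2$'s and $\bP^1$-bundles over rational curves, by hypothesis) contribute nothing to $H^3$, and then use local injectivity of the period map inherited from $\sS_\alpha\to\B_r$ to conclude maximality. The only cosmetic difference is that you phrase the second step explicitly via the Kodaira--Spencer map, whereas the paper argues directly that the period map of $\fX_\alpha$ is locally bijective onto~$\B_r$.
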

\begin{proof}
Note that
$$H^{3,0}(\fX_m) = H^{2,0}(\sS_m) \otimes H^{1,0}(\E)$$
for each $m \in M_{\alpha}$. Moreover the period map of the family $\sS_{\alpha}$ is a locally bijective multivalued map to the ball $\B_r$. Since $\E$ is a fixed curve with fixed Hodge structure, the period map of $\sS_{\alpha}$ yields the period map of $\fX_{\alpha}$. Thus the period map of $\fX_{\alpha}$ is a locally bijective multivalued map to the ball $\B_r$. 
By the fact that we only blow up and down $\bP^1$-bundles over families of rational curves and $\bP^2$-bundles over sections of $\sS_{\alpha}\times \E_3\to M_{\alpha}$, our birational transformations do not have any effect on the third cohomology of the fibers (follows from \cite{Voi}, Th\'eor\`eme $7.31$). Since
$$b_1(\sS_m) = b_3(\sS_m) =0,$$
one concludes that
$$H^3(\fX_m,\C)  \  \ = \  \  H^3(\sS_m\times \E,\C)_1$$
$$ = \ \ H^{1,0}(\E)\otimes(H^{2,0}(\sS_m) \oplus H^{1,1}(\sS_m)_{\xi^2}) \  \ \oplus \  \
H^{0,1}(\E)\otimes(H^{0,2}(\sS_m) \oplus H^{1,1}(\sS_m)_{\xi}).$$
Thus
$$h^{2,1}(\fX) = h^{1,1}(\sS_m)_{\xi^2} = r,$$
which implies that $\fX_{\alpha}$ is maximal.
\end{proof}

\begin{proposition}
Our family $\fX_{\alpha} \to M_{\alpha}$ has a degree 3 automorphism over its basis acting by $\xi$ on $F^2(\sH^3)$.
\end{proposition}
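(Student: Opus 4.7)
The plan is to construct the order-$3$ automorphism explicitly on the pre-quotient $\sS_{\alpha}\times_{M_\alpha}\E$, descend it through the quotient and all the stages of the crepant resolution used to define $\fX_\alpha$, and then read off its action on $F^2(\sH^3)$ from the Künneth decomposition already set up in the proof of Proposition~\ref{zuzu}.

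On $\sS_{\alpha}\times_{M_\alpha}\E$ the two factor automorphisms $\alpha_{\sS}$ and $\alpha_{\E}$ commute and together generate an abelian group isomorphic to $\Z/(3)\times\Z/(3)$, which contains the cyclic subgroup $\langle(\alpha_{\sS},\alpha_{\E})\rangle$ used to form the quotient. The element $\gamma:=(\id,\alpha_{\E})$ commutes with $(\alpha_{\sS},\alpha_{\E})$ and is not in the subgroup, so it descends to a nontrivial automorphism $\beta$ of order $3$ on the quotient $\sS_\alpha\times \E/\langle(\alpha_{\sS},\alpha_{\E})\rangle$ over $M_{\alpha}$.

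Next I would lift $\beta$ to the crepant resolution. The loci blown up in the construction of $\tilde\fX_\alpha$ — the fixed sections, the fixed families of rational curves, and the fixed families of curves inside the exceptional divisor $E_1$ — are all fixed loci of $(\alpha_{\sS},\alpha_{\E})$, hence are $\gamma$-stable because $\gamma$ commutes with this action. Therefore $\gamma$ lifts canonically through each blow-up to an automorphism of $\widetilde{\sS_\alpha\times \E}$, and consequently $\beta$ lifts through $\psi$ to $\tilde\fX_\alpha$. The final blow-down of $\psi(\tilde E_1)$ is along a $\beta$-invariant divisor, so $\beta$ further descends to a degree-$3$ automorphism $\bar\beta$ of $\fX_\alpha$ over $M_\alpha$.

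For the action on $F^2(\sH^3)$, use the fibrewise decomposition recorded in the proof of Proposition~\ref{zuzu}:
$$F^2(\sH^3_m)=H^{3,0}(\fX_m)\oplus H^{2,1}(\fX_m)=H^{1,0}(\E)\otimes\bigl(H^{2,0}(\sS_m)\oplus H^{1,1}(\sS_m)_{\xi^2}\bigr).$$
Since $\bar\beta$ is induced by $\gamma=(\id,\alpha_{\E})$, it acts trivially on the $\sS_m$-factor and as $\alpha_{\E}^\ast$ on $H^{1,0}(\E)$, which by the choice of $\alpha_{\E}$ is multiplication by $\xi$. Hence $\bar\beta$ acts on both summands of $F^2(\sH^3_m)$ as $\xi\cdot\id$, giving the claim.

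The main obstacle I anticipate is the intermediate descent step: verifying that the second $\Z/(3)$-action really does survive each of the two blow-ups, the quotient map $\psi$, and the final blow-down, and that the resulting map on $\fX_\alpha$ is still of order exactly $3$. This is essentially automatic from the abelian commutation of $\gamma$ with $(\alpha_{\sS},\alpha_{\E})$, which ensures that every blown-up or contracted locus is simultaneously stable under both generators, but it is the point where a non-equivariant step in the construction would cause the argument to fail; modulo this bookkeeping the cohomological computation is immediate.
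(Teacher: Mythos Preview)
Your approach is exactly the paper's: take $\gamma=(\id,\alpha_{\E})$ on $\sS_\alpha\times\E$, push it through the blow-ups, the quotient, and the final blow-down, and then read off the action on $F^2$ from the K\"unneth description in Proposition~\ref{zuzu}. The cohomological computation at the end is identical.

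There is one genuine soft spot, precisely at the point you flagged. For the blow-ups and the quotient, commutation of $\gamma$ with $(\alpha_{\sS},\alpha_{\E})$ is indeed enough: the blown-up centres are fixed loci of $(\alpha_{\sS},\alpha_{\E})$ and hence $\gamma$-stable, so $\gamma$ lifts equivariantly and descends to $\tilde\fX_\alpha$. But for the final blow-down, saying that $\psi(\tilde E_1)$ is $\beta$-\emph{invariant} is not sufficient: to descend through a contraction you need $\beta$ to send each contracted $\bP^1$-fibre into another fibre, not merely to preserve the exceptional divisor set-theoretically. The paper closes this by a short local computation: on $\tilde E_1$ the actions of $\alpha_{\E}$ and $(\alpha_{\sS},\alpha_{\E})$ are inverse to each other, so after taking the quotient by $(\alpha_{\sS},\alpha_{\E})$ the induced $\beta$ acts \emph{trivially} on $\psi(\tilde E_1)$, and a trivial action certainly respects the fibration. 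With that one extra observation your argument is complete.
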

\begin{proof}
The automorphism $\alpha_{\E}$ acts on the family $\sS_{\alpha}\times \E$ and commutes with $(\alpha_{\sS},\alpha_{\E})$.  On the exceptional divisor over the isolated fixed sections $(\alpha_{\sS},\alpha_{\E})$ acts trivially. One can easily check that the actions of $\alpha_{\E}$ and $(\alpha_{\sS},\alpha_{\E})$ are inverse to each other on $\tilde E_1$. Moreover $(\alpha_{\sS},\alpha_{\E})$ acts trivially on $E_2$. Thus the automorphism $\alpha_{\E}$ acts also on $\widetilde{\sS_{\alpha}\times \E}$ and commutes with the induced action of $(\alpha_{\sS},\alpha_{\E})$. Due to the fact that $\alpha_{\E}$ fixes $\tilde E_1$, it descends to an automorphism of $\fX$. By the proof of Proposition $\ref{zuzu}$, we have
$$F^2(H^3(\fX_m,\C)) = H^{1,0}(\E)\otimes(H^{2,0}(\sS_m) \oplus H^{1,1}(\sS_m)_{\xi^2}).$$
Since $\alpha_{\E}$ acts by $\xi$ on $H^{1,0}(\E)$ and the action of $\alpha_{\E}$ on $F^2(H^3(\fX_m,\C))$ is given by the action on the corresponding differential forms on $\sS_{m}\times \E$ via pullback, $\alpha_{\E}$ acts by  $\xi$ on $F^2(\sH^3)$.
\end{proof}

\begin{proposition} \label{sc}
The fibers of $\fX_{\alpha} \to M_{\alpha}$ are simply connected.
\end{proposition}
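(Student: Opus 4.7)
The strategy is to compute $\pi_1$ of the singular quotient $(\sS_m\times\E)/G$ by Armstrong's theorem, applied after lifting to the universal cover, and then to observe that the crepant resolution does not change $\pi_1$. Write $G = \langle(\alpha_{\sS},\alpha_{\E})\rangle\cong\Z/3$. The $G$-action on $\sS_m\times\E$ lifts to the universal cover $\sS_m\times\C$, and together with the lattice of deck transformations $\Z[\xi] = H_1(\E,\Z)$ it generates an action of the semidirect product $\Gamma = \Z[\xi]\rtimes\langle\xi\rangle$ on $\sS_m\times\C$, with $\langle\xi\rangle\cong\Z/3$ acting on $\Z[\xi]$ by multiplication by $\xi$. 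The quotient is exactly $(\sS_m\times\E)/G$. Since $\sS_m\times\C$ is simply connected and $\Gamma$ acts properly discontinuously, Armstrong's theorem yields
$$\pi_1\bigl((\sS_m\times\E)/G\bigr) \;=\; \Gamma/\Gamma_0,$$
where $\Gamma_0\trianglelefteq\Gamma$ is the subgroup generated by the elements of $\Gamma$ that act on $\sS_m\times\C$ with a fixed point.

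The central step is the group-theoretic computation $\Gamma_0=\Gamma$. An element $(v,\xi^j)\in\Gamma$ with $j\neq 0$ acts on the $\C$-factor by $z\mapsto\xi^j z+v$, which always has a unique fixed point, and on $\sS_m$ by $\alpha_{\sS}^j$, which has isolated fixed points by Theorem~\ref{huhu} (at least the $k+3\geq 3$ isolated ones). Hence every $(v,\xi^j)$ with $j\neq 0$ lies in $\Gamma_0$. Using the semidirect product relation
$$(0,\xi)\cdot(v,\xi^2) \;=\; (\xi v,\,1)$$
and the fact that $\xi$ is a unit in $\Z[\xi]$, the subgroup $\Gamma_0$ contains all of $\Z[\xi]\times\{1\}$, whence $\Gamma_0=\Gamma$. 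Thus $(\sS_m\times\E)/G$ is simply connected.

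To pass from the singular quotient to the smooth model $\fX_m$, I would rerun the same Armstrong argument on the blown-up cover $\widetilde{\sS_m\times\E}$ of the preceding construction: its universal cover is the blow-up of $\sS_m\times\C$ along the preimages of the smooth $G$-invariant centers of codimension $\geq 2$, which remains simply connected, and on it $\Gamma$ still acts with each $(v,\xi^j)$, $j\neq 0$, fixing a point (either an unchanged fixed point away from the blow-up locus, or an eigenvector in the exceptional projective fiber). Armstrong then gives $\pi_1(\tilde\fX_m)=1$, and since $\fX_m$ is obtained from $\tilde\fX_m$ by a single blow-down of a $\bP^1$-bundle over a smooth curve, and blow-ups and blow-downs along smooth centers in smooth varieties preserve $\pi_1$, we conclude $\pi_1(\fX_m)=1$. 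The only delicate point is the group-theoretic computation in the second paragraph; once $\xi$ is identified as a unit in $\Z[\xi]$, the rest is routine.
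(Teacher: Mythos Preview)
Your argument is correct and takes a genuinely different route from the paper's. The paper first observes that blow-ups along smooth centres preserve $\pi_1$, so it suffices to compute $\pi_1$ of the quotient; it then invokes a bespoke lemma on branched cyclic covers (Lemma~\ref{FundGr}), whose hypothesis asks that the set $\{\gamma\cdot(g_*\gamma)^{-1}:\gamma\in\pi_1(X,p),\ g\in G\}$ generate $\pi_1(X,p)\cong H_1(\E,\Z)\cong\Z[\xi]$, with $G$ acting through $\alpha_{\E}$. You instead pass all the way to the simply connected cover $\sS_m\times\C$, identify the acting group as $\Gamma=\Z[\xi]\rtimes\langle\xi\rangle$, and apply Armstrong's theorem.

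The two reductions lead to superficially similar computations in $\Z[\xi]$, but yours is the one that goes through cleanly. The paper's hypothesis amounts to asking that $(1-\xi)\Z[\xi]+(1-\xi^2)\Z[\xi]=\Z[\xi]$; since $1-\xi^2=-\xi^2(1-\xi)$, this is the single ideal $(1-\xi)$ of norm $3$, hence of index $3$, so Lemma~\ref{FundGr} does not literally apply as stated. Your Armstrong computation sidesteps this: because \emph{every} element $(v,\xi^j)$ with $j\neq 0$ already has a fixed point (not merely the ``commutators'' $(1-\xi^j)v$), the identity $(0,\xi)(v,\xi^2)=(\xi v,1)$ together with $\xi\in\Z[\xi]^\times$ produces all translations, whence $\Gamma_0=\Gamma$. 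What your approach buys is an off-the-shelf tool and a verification that is genuinely complete; what the paper's approach buys is brevity, at the cost of a hypothesis that would need repair (for instance by also accounting for the meridians of the branch locus, which kill the residual $\Z/3$). Your handling of the blow-ups and the final blow-down is the same as the paper's.
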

\begin{proof}
Let $C$ be a complex manifold. The fundamental group of $\bP^N$ is trivial. Hence
$$\pi_1(\tilde C) = \pi_1(C)$$
for each blowing up $\tilde C$ of a smooth curve or a point. Note that $K3$ surfaces are simply connected. Thus $\pi_1(\sS_p \times\E)$ is given by $H_1(\E,\Z)= H^1(\E,\Z)^*$. By the following Lemma and the action of $(\alpha_{\sS},\alpha_{\E})$ on $\pi_1(\sS_p \times\E)\cong H_1(\E,\Z)$ given by the the well-known action of $\alpha_{\E}$ on $H_1(\E,\Z)$, one obtains the stated result.
\end{proof}

\begin{lemma} \label{FundGr}
Let $f:X \to Y$ be a cyclic covering of manifolds of degree $n$, whose Galois group fixes at least one point $p\in X$ and $$\{\gamma\cdot (g_*(\gamma))^{-1}|\gamma \in \pi_1(X,p),g \in {\rm Gal}(f)\}=\pi_1(X,p).$$ Then
$$\pi_1(Y) = 0.$$
\end{lemma}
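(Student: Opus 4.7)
The plan is to prove the lemma by showing that the induced map $f_*\colon \pi_1(X,p)\to \pi_1(Y,q)$, where $q=f(p)$, is simultaneously surjective and the zero homomorphism. This forces $\pi_1(Y,q)=0$, and since $Y$ is path-connected the result follows.

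For surjectivity I would run the following path-lifting argument. Let $\sigma$ be any loop in $Y$ based at $q$. Because $\Gal(f)$ acts transitively on fibers of $f$ and fixes $p$, the fiber $f^{-1}(q)$ is the single point $\{p\}$; consequently any lift of $\sigma$ starting at $p$ must end at $p$ as well, producing a loop $\tilde\sigma$ in $X$ with $f_*[\tilde\sigma]=[\sigma]$. The mild technical step is that $f$ is necessarily ramified at $p$ (a non-trivial deck transformation of an unbranched covering cannot have fixed points), so lifting has to be interpreted through the unbranched part: first perturb $\sigma$ so that $\sigma(t)$ lies outside the branch locus $B\subset Y$ for all $t\in(0,1)$, using that $B$ has real codimension at least two in the intended complex-analytic applications; then apply standard unique path lifting on the honest covering $X\setminus f^{-1}(B)\to Y\setminus B$; finally extend continuously to the endpoints, which are forced to equal $p$ since $p$ is the only preimage of $q$.

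For triviality of $f_*$, note that $f\circ g = f$ for every $g\in \Gal(f)$ implies $f_*(g_*(\gamma))=f_*(\gamma)$, and hence $f_*(\gamma\cdot g_*(\gamma)^{-1})=1$ in $\pi_1(Y,q)$ for every $\gamma\in\pi_1(X,p)$ and $g\in\Gal(f)$. By the hypothesis of the lemma, every element of $\pi_1(X,p)$ is of this form, so $f_*$ vanishes identically. Combined with surjectivity from the previous paragraph, this yields $\pi_1(Y,q)=0$.

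The main obstacle is justifying the path-lifting step at the ramified basepoint $p$; once one grants that loops at $q$ lift to loops at $p$, the rest of the proof is a formal manipulation with the commutator-type relation $\gamma\cdot g_*(\gamma)^{-1}$. In the concrete application where the cover is a finite cyclic quotient of a complex manifold and the branch locus has complex codimension at least one, the lifting step is standard and the lemma provides exactly what Proposition \ref{sc} needs.
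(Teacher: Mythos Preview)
Your proof is correct and follows essentially the same two-step strategy as the paper: establish that $f_*:\pi_1(X,p)\to\pi_1(Y,f(p))$ is surjective by lifting loops, then show it is the zero map using $f\circ g=f$ together with the hypothesis on $\{\gamma\cdot(g_*\gamma)^{-1}\}$. You are in fact more careful than the paper about the path-lifting step at the totally ramified basepoint (the paper simply asserts that loops lift to closed paths), and your observation that $f^{-1}(q)=\{p\}$ makes the closedness of the lift immediate.
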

\begin{proof}
One can lift each path $\gamma$ on $Y$ with $\gamma(0) = \gamma(1) = f(p)$ to $n$ closed paths on $X$ with starting and ending point $p$. Each of these paths is mapped to $\gamma$. Thus the induced homomorphism $f_*:\pi_1(X,p)\to \pi_1(Y,f(p))$ is surjective.

Since for each $g\in \Gal(f)$ and $\gamma \in \pi_1(X,p)$ one has
$$f_*(\gamma) = f_*g_*(\gamma),$$
one concludes from the assumptions that $f_*$ is the zero map.
\end{proof}

\begin{Theorem} \label{gagu}
For $0 \leq k \leq 6$ one has a maximal family of simply connected Calabi-Yau 3 manifolds $X$ with a maximal automorphism acting by $\xi$ on $F^2(H^3(X,\C))$. The Hodge numbers are given by
$$h^{2,1}(X) = 6-k \  \  \mbox{and} \  \ h^{1,1}(X)  = 18 + 11\cdot k.$$
\end{Theorem}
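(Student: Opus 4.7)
Nearly all of the non-numerical assertions are already assembled by the preceding results. For each $0 \leq k \leq 6$, Theorem~\ref{huhu} supplies a K3 surface $\sS_m$ carrying a non-symplectic order-$3$ automorphism $\alpha_{\sS}$ whose fixed locus consists of $k$ disjoint smooth rational curves together with $k+3$ isolated points, and with $r = 6-k$. Because the codimension-one component of the fixed locus consists of rational curves, the Construction together with Proposition~\ref{zuzu} yields the desired maximal family $\fX_{\alpha}\to M_{\alpha}$ of Calabi-Yau threefolds with $h^{2,1} = r = 6-k$. The proposition immediately following Proposition~\ref{zuzu} provides a degree-$3$ automorphism of $\fX_{\alpha}$ over $M_{\alpha}$ acting by $\xi$ on $F^{2}(\sH^{3})$, which is therefore a maximal automorphism of every fiber, and Proposition~\ref{sc} gives simple connectedness of the fibers.

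It thus only remains to establish $h^{1,1}(X) = 18 + 11k$ for a fiber $X = \fX_m$. The plan is to deduce this from the Euler characteristic via $\chi(X) = 2(h^{1,1}(X) - h^{2,1}(X))$. The Burnside-type formula, together with $\chi(\sS_m \times \E) = 24\cdot 0 = 0$ and the identity $(\sS_m \times \E)^g = \sS_m^{\alpha_{\sS}}\times \E^{\alpha_{\E}}$ for every non-trivial $g \in G = \langle(\alpha_{\sS},\alpha_{\E})\rangle$, gives
$$\chi\!\left((\sS_m\times \E)/G\right) = \tfrac{1}{3}\bigl(0 + 2\cdot (2k + (k+3))\cdot 3\bigr) = 6k+6.$$

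The crepant resolution then contributes in two pieces. Near each of the $3(k+3)$ isolated $G$-fixed points of $\sS_m\times \E$ the tangential weights are $(\xi,\xi,\xi)$, giving a $\tfrac{1}{3}(1,1,1)$-singularity whose crepant resolution is a single blow-up with exceptional divisor $\bP^{2}$; this contributes $\chi(\bP^{2}) - 1 = 2$ per point, hence $6k+18$ in total. Along each of the $3k$ $G$-fixed rational curves the transverse weights are $(\xi,\xi^{2})$, producing a $\bP^{1}$-family of $A_{2}$ surface singularities; the two successive blow-ups followed by the blow-down of $\psi(\tilde E_{1})$ described in the Construction is precisely the minimal crepant resolution, replacing each singular $\bP^{1}$ by two $\bP^{1}$-bundles over $\bP^{1}$ meeting along a common section, contributing $(4+4-2)-2 = 4$ per curve and $12k$ in total. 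Summing,
$$\chi(X) = (6k+6) + (6k+18) + 12k = 24k+24,$$
so $h^{1,1}(X) = \tfrac{1}{2}\chi(X) + h^{2,1}(X) = (12k+12) + (6-k) = 18 + 11k$. The one step requiring genuine verification is that the explicit sequence of blow-ups and blow-down in the Construction realises the standard minimal crepant resolution along the curves of $A_{2}$-singularities, so that the exceptional configuration is as claimed; once the local weights $(\xi,\xi^{2},1)$ are fixed, this reduces to a direct local computation of normal bundles and $\langle\alpha\rangle$-eigenvalues that is routine.
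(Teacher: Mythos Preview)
Your argument is correct, and the first paragraph matches the paper verbatim: all the qualitative claims are already established, and only $h^{1,1}(X)$ remains.

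For that computation, however, you take a genuinely different route from the paper. The paper computes $h^{1,1}(X)$ \emph{directly}: it first evaluates the $G$-invariant part of $H^{1,1}(\sS_m\times\E)$ via K\"unneth,
\[
h^{1,1}(\sS_m\times\E)_1 \;=\; h^{1,1}(\E) + h^{1,1}(\sS_m)_1 \;=\; 1 + \bigl(22 - 2(r+1)\bigr) \;=\; 21 - 2r,
\]
and then adds one divisor class for each blow-up and subtracts one for each blow-down in the explicit resolution: $3(k+3)$ point blow-ups and, for each of the $k$ fixed curves on $\sS_m$, a net of $9-3=6$ curve blow-ups. Summing gives $21-2r + 6k + 3(k+3) = 18 + 11k$. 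Your approach instead exploits the identity $\chi(X) = 2\bigl(h^{1,1}(X) - h^{2,1}(X)\bigr)$ for a simply connected Calabi--Yau threefold and computes $\chi(X)$ via the Burnside formula for the quotient plus the local contributions of the crepant resolution at the $\tfrac{1}{3}(1,1,1)$ points and along the transverse $A_2$ curves. Both computations are straightforward once the fixed-locus data from Theorem~\ref{huhu} and the local weights from the Construction are in hand; the paper's method keeps closer track of the actual exceptional divisors (which is natural given that the Construction spells them out), while yours avoids having to count blow-ups one by one and instead leans on the already-known value $h^{2,1}=6-k$. One small remark: your final caveat about verifying that the Construction realises the minimal $A_2$ resolution is not really needed for the Euler-characteristic count, since any two crepant resolutions of a curve of $A_2$ singularities differ by flops and hence have the same $\chi$; alternatively, you could read off $\chi$ directly from the explicit blow-up/blow-down sequence without identifying it with the standard resolution.
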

\begin{proof}
By the previous results, it remains only to compute $h^{1,1}(X)$. Since each $K3$ surface $S$ has the Betti numbers $b_1(S) = 0$ and $b_2(S) = 22$, one concludes that the eigenspace $h^{1,1}(\sS_m\times \E)_1$ of $h^{1,1}(\sS_m\times \E)$ with the eigenvalue 1 with respect to $(\alpha_{\sS},\alpha_{\E})$ is given by:
$$h^{1,1}(\sS_m\times \E)_1 = h^{0,0}(\sS_m)\cdot h^{1,1}(\E) + h^{1,1}(\sS_m)_1\cdot h^{0,0}(\E)$$
$$= 22 - 2(r+1) + 1 = 21 - 2r$$
For each isolated fixed section on $\sS_{\alpha}$ we have to blow up 3 sections on $\sS_{\alpha}\times \E$. Moreover for each fixed family of curves on $\sS_{\alpha}$ we have altogether to blow up 9 families of curves and to blow down 3 families of curves. By Theorem $\ref{huhu}$, we have $0 \leq k \leq 6$ fixed families of curves, $n=k+3$ isolated fixed sections on $\sS_{\alpha}$ and $r = 6-k$. Thus we conclude:
$$h^{1,1}(X) = h^{1,1}(\sS_m\times \E)_1 + 6\cdot k + 3 \cdot n = 21 - 2r+ 6\cdot k + 3\cdot (k+3)$$
$$= 21-12+2\cdot k+ 6\cdot k + 3\cdot k + 9 = 18 + 11\cdot k$$
\end{proof}

\section{Constant $F^2$-bundle}

In this section we show that there are some Calabi-Yau 3-manifolds, which cannot occur as fibers of a maximal family with maximally unipotent monodromy. This comes about, if $F^2(\sH^3)$ is constant, a concept which we will make precise here

Let $f:\sY \to \sZ$ be a maximal holomorphic family of Calabi-Yau 3-manifolds with $X \cong \sY_z$, where $\sZ$ is an arcwise connected topological space covered by open charts of subsets of $\C^{h^{2,1}(X)}$ such that the gluing maps are biholomorphic.\footnote{Apart from the facts that $\sZ$ does not need to be Hausdorff and the topology of $\sZ$ does not need to have a countable basis, $\sZ$ can be considered as a manifold. The author has made these assumtions to make also sure that there is not a pathological basis which allows maximally unipotent monodromy.} Moreover assume that $z \in U \subset \sZ$ is an open and contractible manifold and consider the period map
$$p_{U}:U \to {\rm Grass}(H^3(X,\C),b_3(X)/2)$$
associating to each $u \in U$ the subspace
$$F^2(H^3(\sY_u,\C))\subset H^3(\sY_u,\C) \cong H^3(\sY_U,\C) \cong H^3(X,\C)$$
as described in \cite{Voi}, Chapter 10. We say that $F^2(\sH^3)$ is constant over $U$, if the period map $p_{U}:U \to {\rm Grass}(H^3(X,\C),b_3(X)/2)$ is constant.
Moreover let $\gamma:[0,1] \to \sZ$ be a closed path with
$$\gamma(0) = \gamma(1) = z.$$
Let $t \in [0,1]$ and $\gamma^{-1}(H^3_{\C})$ denote the inverse image sheaf of $H_{\C}^3= H^3 \otimes \C$. Since we have a canonical isomorphism between $H_{\C}^3$ and the locally constant sheaf associated to the presheaf
$$V \to H^3(f^{-1}(V),\C|_{f^{-1}(V)}),$$
we obtain a natural isomorphism
$$\gamma(t)^*: H^3(\sY_{\gamma(t)},\C)\to(H^3_{\C})_{\gamma(t)} \to \gamma^{-1}(H^3_{\C})_t.$$

\begin{definition} \label{defit}
The bundle $F^2(\sH^3)$ is constant along $\gamma$, if $\gamma([0,1])$ can be covered by open contractible manifolds $U_1,\ldots,U_N$ such that the period maps $p_{U_1},\ldots,p_{U_N}$ are constant.
\end{definition}

\begin{remark}\label{broa}
Assume that $F^2(\sH^3)$ is constant along $\gamma$. That means that $\gamma([0,1])$ can be covered by open contractible sets $U_1,\ldots,U_N$ such that $F^2(\sH^3)$ is constant over each $U_i$. Let
$$V_1\cup\ldots\cup V_{N'} = [0,1]$$
be a finite subcovering of the covering of connected components of the several $\gamma^{-1}(U_i)$ and $t_i \in V_i$. On each $V_i$ we define a period map
$$p_{V_i}:V_i\to {\rm Grass}(\gamma^{-1}(H^3_{\C})_{t_i},b_3(X)/2)$$
associating to each $t \in V_i$ the subspace
$$\gamma(t)^*(F^2(H^3(\sY_{\gamma(t)},\C)))\subset \gamma^{-1}(H^3_{\C})_{t} \cong \gamma^{-1}(H^3_{\C})(V_i) \cong \gamma^{-1}(H^3_{\C})_{t_i}.$$
The assumption that $F^2(\sH^3)$ is constant over each $U_i$ implies that each $p_{V_i}$ is constant. Since $[0,1]$ is simply connected,  $\gamma^{-1}(H^3_{\C})$ is a constant sheaf. Therefore the constant maps $p_{V_i}$ can be glued to a constant map
$$\gamma^*p:[0,1] \to {\rm Grass}(\gamma^{-1}(H^3_{\C})_1,b_3(X)/2)$$
associating to each $t \in [0,1]$ the subspace
$$\gamma(t)^*(F^2(H^3(\sY_{\gamma(t)},\C)))\subset \gamma^{-1}(H^3_{\C})_t \cong \gamma^{-1}(H^3_{\C})([0,1]) \cong \gamma^{-1}(H^3_{\C})_1.$$
\end{remark}

\begin{lemma} \label{hehe}
Assume that $F^2(\sH^3)$ is constant along $\gamma \in \pi_1(\sZ,z)$. Then one has
$$\rho(\gamma)(F^2(H^3(X,\C))) = F^2(H^3(X,\C)).$$
\end{lemma}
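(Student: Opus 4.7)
The plan is to exploit the glued constant period map $\gamma^\ast p : [0,1] \to \mathrm{Grass}(\gamma^{-1}(H^3_{\C})_1, b_3(X)/2)$ provided by Remark \ref{broa}, and then read off the conclusion by evaluating this map at the two endpoints of the loop $\gamma$. Since both endpoints coincide with $z$, the two values can be compared inside the fiber $H^3(X,\C)$, and that comparison is exactly what the monodromy operator $\rho(\gamma)$ encodes.

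More concretely, I would first note that the pullback sheaf $\gamma^{-1}(H^3_{\C})$ is constant on the simply connected interval $[0,1]$, so that the restriction maps $\gamma^{-1}(H^3_{\C})([0,1]) \to \gamma^{-1}(H^3_{\C})_t$ are isomorphisms for every $t \in [0,1]$. The monodromy $\rho(\gamma)$ acting on $H^3(X,\C) = H^3(\sY_z,\C)$ is, by the standard definition of the monodromy of a local system, the composition
$$\rho(\gamma) \ = \ \bigl(\gamma(1)^\ast\bigr)^{-1} \circ \sigma \circ \gamma(0)^\ast,$$
where $\sigma : \gamma^{-1}(H^3_{\C})_0 \to \gamma^{-1}(H^3_{\C})_1$ is the canonical identification coming from the global sections of the constant sheaf $\gamma^{-1}(H^3_{\C})$ on $[0,1]$.

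Next I would apply the hypothesis. By Remark \ref{broa}, the constancy of $F^2(\sH^3)$ along $\gamma$ gives a constant map $\gamma^\ast p$, so its values at $t=0$ and $t=1$, transported to the global fiber $\gamma^{-1}(H^3_{\C})([0,1])$, coincide as subspaces of dimension $b_3(X)/2$. Unwinding the definition of $\gamma^\ast p$ from Remark \ref{broa}, these two values are
$$\gamma(0)^\ast\bigl(F^2(H^3(\sY_z,\C))\bigr) \qquad \text{and} \qquad \gamma(1)^\ast\bigl(F^2(H^3(\sY_z,\C))\bigr),$$
viewed inside the global fiber via $\sigma$. Equality of these two subspaces, combined with the formula for $\rho(\gamma)$ above, yields precisely $\rho(\gamma)\bigl(F^2(H^3(X,\C))\bigr) = F^2(H^3(X,\C))$.

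The only real subtlety is bookkeeping: one has to check that the identifications $\gamma(t)^\ast$ used in Definition \ref{defit} and Remark \ref{broa} are genuinely the same identifications that define the monodromy representation $\rho$, and in particular that no implicit choice of basepoint has been lost when passing from the cover $\{U_i\}$ of $\gamma([0,1])$ to the refinement $\{V_i\}$ of $[0,1]$. This is routine but is the step where a careless conflation of stalks of $H^3_{\C}$ with stalks of $\gamma^{-1}(H^3_{\C})$ could obscure the argument; after that, the lemma is essentially a tautology.
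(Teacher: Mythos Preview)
Your proof is correct and follows essentially the same approach as the paper: both recall the formula $\rho(\gamma) = (\gamma(1)^*)^{-1}\circ\eta\circ\gamma(0)^*$ for the monodromy in terms of the canonical identification $\eta$ (your $\sigma$) of stalks of the constant pullback sheaf, then invoke Remark~\ref{broa} to obtain that $\gamma^*p$ is constant, which translates exactly into $\eta\bigl(\gamma(0)^*(F^2)\bigr)=\gamma(1)^*(F^2)$ and hence the claim. Your additional remarks on the bookkeeping of identifications are a helpful elaboration but do not alter the argument.
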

\begin{proof}
Recall that the monodromy of $H^3_{\C}$ is defined by
$$\rho(\gamma) = (\gamma(1)^*)^{-1}\circ\eta\circ\gamma(0)^*,$$
where $\eta:\gamma^{-1}(H^3_{\C})_0\to \gamma^{-1}(H^3_{\C})_1$ is the natural isomorphism
$$\gamma^{-1}(H^3_{\C})_0 \cong \gamma^{-1}(H^3_{\C})([0,1]) \cong \gamma^{-1}(H^3_{\C})_1.$$
Since we assume that $F^2(\sH^3)$ is constant along $\gamma$, the map $\gamma^*p$ is constant by Remark $\ref{broa}$. In other terms we have
$$\eta((\gamma(0)^*)(F^2(H^3(X,\C)))) = \gamma(1)^*(F^2(H^3(X,\C))).$$
Thus we conclude the stated result from the definition of monodromy.
\end{proof}

\begin{lemma} \label{lem2}
Let $U_1,\ldots,U_N$ be a finite covering of $\gamma([0,1])$ by open contractible sets. Then $F^2(\sH^3)$ is constant along $\gamma$, if $F^2(\sH^3)$ is constant over $U_i$ for some $i$.
\end{lemma}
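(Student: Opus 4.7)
The plan is to propagate constancy of $p_{U_i}$ along a chain of overlapping contractible open sets using the identity principle for holomorphic maps. First I would apply compactness of $[0,1]$ and the Lebesgue number lemma to the open cover $\{\gamma^{-1}(U_k)\}_{k=1}^N$ to choose a partition $0=t_0<t_1<\ldots<t_M=1$ together with indices $j_0,\ldots,j_{M-1}$ such that $\gamma([t_m,t_{m+1}])\subset U_{j_m}$. This produces a chain $U_{j_0},\ldots,U_{j_{M-1}}$ still covering $\gamma([0,1])$, in which consecutive members overlap because $\gamma(t_{m+1})\in U_{j_m}\cap U_{j_{m+1}}$. Since $U_i$ contains at least one point of $\gamma([0,1])$, it meets some $U_{j_{m_0}}$, giving an entry point from which constancy can be spread along the chain.

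The heart of the argument is the following propagation step, which I would prove as a standalone sublemma: if $U,V\subset\sZ$ are two contractible open sets with $U\cap V\neq\emptyset$ and $p_U$ is constant on $U$, then $p_V$ is constant on $V$. To see this, I would restrict to any connected component $W$ of $U\cap V$. On $W$ the local system $H^3_{\C}$ is trivial, so the two identifications $H^3(\sY_W,\C)\cong H^3(X,\C)$ coming from $U$ and from $V$ differ by a single linear automorphism $\psi$ of $H^3(X,\C)$. Consequently $p_V|_W=\psi_{*}\circ p_U|_W$, where $\psi_{*}$ is the induced automorphism of the Grassmannian, and constancy of $p_U$ forces $p_V$ to be constant on the non-empty open subset $W$ of the connected complex manifold $V$. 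Since $p_V$ is holomorphic, the identity principle for holomorphic maps between complex manifolds then upgrades this to constancy of $p_V$ on all of $V$.

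Applying the propagation step to the pair $(U_i,U_{j_{m_0}})$ and then iterating along the chain to the pairs $(U_{j_m},U_{j_{m+1}})$ in both directions from $m_0$, I obtain that $p_{U_{j_m}}$ is constant for every $m$. The covering $\{U_{j_0},\ldots,U_{j_{M-1}}\}$ of $\gamma([0,1])$ then witnesses that $F^2(\sH^3)$ is constant along $\gamma$ in the sense of Definition~\ref{defit}. The main obstacle I anticipate is the careful bookkeeping of the identifications between $H^3(\sY_\bullet,\C)$ and $H^3(X,\C)$ used in the various local period maps; once one observes that over a connected trivializing open set these identifications differ only by a globally constant linear automorphism, the identity principle can be invoked cleanly without confusing values in the target Grassmannian.
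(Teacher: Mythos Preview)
Your argument is correct and follows essentially the same route as the paper: both propagate constancy from one member of the cover to its overlapping neighbours via the identity principle, using that a holomorphic map on a connected complex manifold which is constant on a nonempty open subset is globally constant. The only differences are cosmetic---the paper re-indexes so that the constant $U_j$ come first and argues that the set $S=\{j:p_{U_j}\text{ constant}\}$ cannot stop short of $\{1,\ldots,N\}$, instead of first extracting a Lebesgue chain, and it leaves implicit the transition automorphism $\psi$ you carefully spell out; your tacit assumption that $U_i$ actually meets $\gamma([0,1])$ is likewise unstated in the paper's proof but needed there as well.
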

\begin{proof}
By our assumption, we have without loss of generality that $p_{U_1},\ldots,p_{U_k}$ are constant for $1\leq k$. If $k < N$, one has
$$V_k \cap \tilde V_k \neq \emptyset,$$
$$\mbox{where} \ \ V_k := U_1\cup \ldots\cup U_{k} \ \ \mbox{and} \ \ \tilde V_k = U_{k+1}\cup \ldots\cup U_{N}.$$
Without loss of generality  there is a $U_{k+1}$ with $V_k \cap U_{k+1}\neq \emptyset$. Note that on a connected complex manifold $M$ a holomorphic map is constant, if it is constant on an open subset of $M$. Hence $p_{U_{k+1}}$ is constant, since $p_{U_{k+1}}|_{V_k \cap U_{k+1}}$ is locally constant. By the fact that
$$S = \{i \in \{1,\ldots,N\}|p_{U_i} \ \ \mbox{constant}\}$$
satisfies $1 \leq\sharp S \leq N$ and there cannot be a $0<k < N$ with $\sharp S = k$, we get $\sharp S = N$.
\end{proof}

\begin{Theorem} \label{eigen}
Assume that the bundle $F^2(\sH^3)$  is constant in the $VHS$ of the Kuranishi family of $X$. Then each matrix of the monodromy representation of $H^3=R^3f_*(\Q)$ is given by $$\left(\begin{array}{cc}
M & 0 \\
0  &  \bar M
\end{array} \right)
$$
for some $\frac{b_3(X)}{2} \times \frac{b_3(X)}{2}$ matrix $M$ acting on $F^2(H^3(X,\C))$ and $\bar M$ acting on $\overline{F^2(H^3(X,\C))}$.
\end{Theorem}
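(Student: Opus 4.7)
The plan breaks into four steps: (i) transport constancy of $F^2(\sH^3)$ from the Kuranishi base to a neighbourhood of $z$ in $\sZ$; (ii) extend this to constancy along any loop based at $z$; (iii) apply Lemma \ref{hehe} to obtain invariance of $F^2(H^3(X,\C))$ under monodromy; and (iv) use that monodromy commutes with complex conjugation to deduce the block-diagonal form.

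For (i), fix the maximal family $f:\sY\to\sZ$ whose monodromy is under consideration, with $X\cong\sY_z$. By the definition of a maximal family recalled in Section 1, there exists an open neighbourhood $U$ of $z$ such that $\sY_U\to U$ is isomorphic to the Kuranishi family of $X$. After shrinking we may assume $U$ is open and contractible. The hypothesis that $F^2(\sH^3)$ is constant in the $VHS$ of the Kuranishi family of $X$ then translates into $p_U$ being constant, so $F^2(\sH^3)$ is constant over $U$ in the sense of Definition \ref{defit}.

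For (ii), let $\gamma:[0,1]\to\sZ$ be an arbitrary closed path based at $z$. Cover $\gamma([0,1])$ by finitely many open contractible sets $U_1,\ldots,U_N$, and arrange $U_1\subseteq U$; then $p_{U_1}$ is the restriction of a constant map, hence constant, so Lemma \ref{lem2} forces every $p_{U_i}$ to be constant. Thus $F^2(\sH^3)$ is constant along $\gamma$, and Lemma \ref{hehe} yields
$$\rho(\gamma)\bigl(F^2(H^3(X,\C))\bigr)=F^2(H^3(X,\C)).$$

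For (iv), observe that $\rho(\gamma)$ arises from a representation on the rational local system $R^3f_*(\Q)$, so its complexification commutes with complex conjugation on $H^3(X,\C)$. Conjugating the invariance just established gives $\rho(\gamma)\bigl(\overline{F^2(H^3(X,\C))}\bigr)=\overline{F^2(H^3(X,\C))}$. Because $X$ is a Calabi-Yau 3-manifold, the Hodge decomposition gives
$$H^3(X,\C)=F^2(H^3(X,\C))\oplus\overline{F^2(H^3(X,\C))}$$
with both summands of dimension $b_3(X)/2$. In a basis of $F^2(H^3(X,\C))$ together with its complex conjugate, $\rho(\gamma)$ thus takes the claimed block-diagonal shape; the block on $\overline{F^2(H^3(X,\C))}$ is $\overline{M}$ precisely because $\rho(\gamma)$ commutes with complex conjugation. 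The only step needing any care is (i), which is essentially an unpacking of the definition of maximality; the genuine content is already packaged into Lemmas \ref{hehe} and \ref{lem2}.
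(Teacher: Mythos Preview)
Your proof is correct and follows essentially the same route as the paper: use maximality to get constancy of $p_U$ on a neighbourhood of $z$, invoke Lemma~\ref{lem2} to propagate this along an arbitrary loop, apply Lemma~\ref{hehe} for monodromy-invariance of $F^2$, and then use rationality of $\rho$ together with the Hodge splitting $H^3(X,\C)=F^2\oplus\overline{F^2}$ to obtain the block form. The only cosmetic difference is that you spell out step~(i) more explicitly via the definition of a maximal family, whereas the paper leaves this implicit.
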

\begin{proof}
The bundle $F^2(\sH^3)$ is constant along $\gamma$ for all $\gamma \in \pi_1(\sZ,z)$ in the sense of Definition $\ref{defit}$, since there exists an open neighborhood $U'$ of $z$ such that $p_{U'}$ is constant by the assumptions (see Lemma $\ref{lem2}$). By Lemma $\ref{hehe}$, this implies that $F^2(H^3(X,\C))$ is fixed by the monodromy.

The monodromy representation of $H^3$ is given by a homomorphism
$$\rho:\pi_1(\sZ,z) \to  \GL(H^3(X,\Q)).$$
Thus for each $v \in F^2(H^3(X,\C))$ one has
$$\rho(\gamma)(\bar v) = \overline{\rho(\gamma)(v)}.$$
Therefore each matrix of the monodromy representation of $H^3=R^3f_*(\Q)$ fixes both $F^2(H^3(X,\C))$ and $\overline{F^2(H^3(X,\C))}$. Since
$$H^3(X,\C) = F^2(H^3(X,\C)) \oplus \overline{F^2(H^3(X,\C))},$$
one obtains the stated result.
\end{proof}

\begin{pkt}
In the literature \cite{CK}, \cite{Gr} one finds a formulation of the mirror conjecture, which was given in \cite{Mor}, Section 7 first. For this formulation of the mirror conjecture one needs maximally unipotent monodromy defined in \cite{Mor}, Definition 3. In the case of maximally unipotent monodromy the monodromy representation yields some unipotent matrices $T_1, \ldots, T_k$. One chooses
the matrix $N$ by a linear combination of logarithms of $T_1, \ldots, T_k$ and obtains a weight filtration
$$0 \subset W_0 \subseteq W_1  \subseteq \ldots \subseteq W_6 = H^3(X,\Q)\  \ \mbox{with} \  \ W_0 = {\rm Im }(N^3) \  \ \mbox{and} \   \ \dim W_0 = 1$$
in the case of maximally unipotent monodromy.

Now assume that we are in the situation of Theorem $\ref{eigen}$. In this case $N^3$ acts by $M$ on $F^2(H^3(X,\C))$ and by $\bar M$ on $\overline{F^2(H^3(X,\C))}$. Hence ${\rm Im }(N^3)$ has an even dimension. But 1 is not an even number. Therefore in the case of Theorem $\ref{eigen}$ the assumptions of this formulation of mirror symmetry can not be satisfied.
\end{pkt}

\begin{example} \label{ex1}
Let $S$ be a $K3$ surface with involution $\iota_S$ acting by 1 on $H^{1,1}(S)$ and by $-1$ on $H^{2,0}(S) \oplus H^{0,2}(S)$. By taking an elliptic curve $E$ with involution $\iota_E$ and the Borcea-Voisin construction \cite{Bc2}, \cite{Voi2} obtained from blowing up the singularities of
$$S \times E/\langle(\iota_S,\iota_E)\rangle,$$
one gets a Calabi-Yau 3-manifold $X$, whose Kuranishi family has a constant bundle $F^2(\sH^3)$. The Calabi-Yau 3-manifold $X$ has the Hodge numbers
$$h^{1,1}(X) = 61 \  \ \mbox {and} \  \ h^{2,1}(X) = 1$$
(for details see \cite{JCR}, Example $1.6.9$ and \cite{JCR}, Example $11.3.11$).\footnote{The fact that $F^2(\sH^3)$ is constant follows from the arguments that the $VHS$ of the Kuranishi family of $X$ depends only on the $VHS$ of the corresponding deformation of elliptic curves and
$$F^2(X) = H^{3,0}(X)\oplus H^{2,1}(X) = H^{2,0}(S) \otimes H^{1,0}(E) \oplus H^{2,0}(S) \otimes H^{0,1}(E) =  H^{2,0}(S) \otimes H^{1}(E,\C).$$}
\end{example}

\begin{remark} \label{haha}
The pullback action of an extension of a maximal automorphism $\alpha$ of $X$ to the local universal deformation $\sX \to B$ yields an induced action on the constant sheaf $H^3_B$ and hence on $\sH^3_B = H^3_B\otimes \sO_B$. Each eigenspace ${\rm Eig}(\alpha,\eta)\subset \sH^3_B$ is parallel with respect to the Gauss-Manin connection, that means $\nabla_{\chi}s\in {\rm Eig}(\alpha,\eta)(U)$ for each $\chi \in TB(U)$ and $s\in {\rm Eig}(\alpha,\eta)(U)$, where $U \subset B$ open. There does not exist a nowhere vanishing holomorphic section $s\in\sH^3_B(U)$ such that each $s_{b}\in {\rm Eig}(\alpha,\eta_b)_b$ for some eigenvalue $\eta_b$ and $\eta_{b_1} \neq \eta_{b_2}$ for some $b_1,b_2 \in U$. Since for each $b \in B$ the space $F^3(\sH^3)_b$ has to be contained in some eigenspace, there is a fixed eigenvalue $\eta$ such that $F^3(\sH^3)_{B}\subset {\rm Eig}(\alpha,\eta)$. Let the tangent space $T_bB$ be generated by the basis
$$\frac{\partial}{\partial x_1}, \ldots, \frac{\partial}{\partial x_{h^{2,1}(X)}}.$$
For a holomorphic section $\omega$ of $F^3(\sH^3)_B$ with $\omega(b) \neq 0$ one has
$$\frac{\partial \omega}{\partial x_1}(b), \ldots, \frac{\partial \omega}{\partial x_{h^{2,1}(X)}}(b)\in {\rm Eig}(\alpha,\eta)_b.$$
Since
$$\omega(b),\frac{\partial \omega}{\partial x_1}(b), \ldots, \frac{\partial \omega}{\partial x_{h^{2,1}(X)}}(b)$$
is a basis of $F^2(\sH^3)_b$ and our observations hold true for each $b \in B$, one concludes that
$$F^2(\sH^3)_B \subset {\rm Eig}(\alpha,\eta).$$
Now assume that $\eta$ is not real. One concludes similarly to the proof of Theorem $\ref{eigen}$ that
$$F^2(\sH^3)_B = {\rm Eig}(\alpha,\eta) \ \ \mbox{and} \ \ \overline{F^2(\sH^3)_B} ={\rm Eig}(\alpha,\bar \eta),$$
since $\alpha$ is defined over $\Q$ and
$$H^3(X,\C) = F^2(H^3(X,\C)) \oplus \overline{F^2(H^3(X,\C))}.$$
Thus the period map
$$p_B:B\to {\rm Grass}(H^3(X,\C),b_3(X)/2)$$
is constant and the assumptions of Theorem $\ref{eigen}$ are satisfied. Therefore $X$ cannot be a fiber of a maximal family with maximally unipotent monodromy, if $X$ has a maximal automorphism acting by a non-real eigenvalue on $H^{3,0}(X)$. In particular this holds true for the examples constructed in Section 1 (see Theorem $\ref{gagu}$).
\end{remark}

\section{Consequences for complex multiplication}

In this section we explain and prove the following theorem:

\begin{Theorem}\label{gugu}
Assume that $X$ has a maximal automorphism $\alpha$, which acts by a non-real eigenvalue $\eta$ on $H^{3,0}(X)$. Then the Kuranishi family $\sX \to B$ of $X$ has a dense set of $CM$ fibers.
\end{Theorem}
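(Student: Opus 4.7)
The plan is to show that the variation of Hodge structure on $B$ factors through a Shimura datum with real form $\GU(1,r)$, and then to invoke the classical density of $CM$ points on such a Shimura variety.

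First, by Remark~\ref{haha} the bundle $F^2(\sH^3)_B$ is the constant subspace $\Eig(\alpha,\eta)\subset H^3(X,\C)$, and $\overline{F^2(\sH^3)_B}=\Eig(\alpha,\bar\eta)$. Hence the period map
\[
p_B:B\longrightarrow \bP(\Eig(\alpha,\eta)),\qquad b\mapsto F^3(\sH^3)_b,
\]
is well defined. Let $Q$ denote the polarization on $H^3(X,\Q)$ and set $h(u,v)=\sqrt{-1}\,Q(u,\bar v)$. Since $\alpha$ is defined over $\Q$ and of finite order, it preserves $h$, so eigenspaces for distinct eigenvalues are $h$-orthogonal. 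The Hodge--Riemann bilinear relations compute the signature of $h$ on $\Eig(\alpha,\eta)=H^{3,0}(X)\oplus H^{2,1}(X)$ as $(1,r)$, where $r=h^{2,1}(X)$. The locus of positive lines in $\bP(\Eig(\alpha,\eta))$ is therefore a complex ball $\B_r$, and $p_B$ takes values in $\B_r$. Since $\sX\to B$ is the Kuranishi family, the Kodaira--Spencer map is an isomorphism and $p_B:B\to\B_r$ is a local biholomorphism.

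Second, as observed in the proof of Remark~\ref{haha}, $\alpha$ preserves every Hodge bundle of the VHS and acts as the scalar $\eta$ on $F^2=H^{3,0}\oplus H^{2,1}$ and by $\bar\eta$ on $\overline{F^2}=H^{1,2}\oplus H^{0,3}$. Consequently the Hodge cocharacter of each fiber commutes with $\alpha$, and the generic Mumford--Tate group satisfies
\[
\MT\subseteq G:=Z_{\GSp(H^3(X,\Q),Q)}(\alpha),
\]
where $G$ is a reductive $\Q$-algebraic group whose real form is the unitary similitude group $\GU(1,r)$. Its associated bounded symmetric domain is precisely the ball $\B_r$, so $(G,\B_r)$ is a Shimura datum, and the period map lifts to an open holomorphic map from $B$ into the connected component of the corresponding Shimura variety containing the class of $X$.

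Finally, $CM$ points are dense in any Shimura variety of the above type (Shimura's classical theorem on density of special points). A $CM$ point corresponds to a Hodge structure whose Mumford--Tate group is a torus, i.e.\ to a $CM$ Hodge structure on $H^3$; since $\sX_b$ is a Calabi--Yau $3$-manifold with $CM$ on $H^3(\sX_b)$, this exactly means $\sX_b$ is a $CM$ fiber. As $p_B$ is locally biholomorphic, the preimage of this dense set is dense in $B$. The main obstacle is the second step: identifying $G$ as a reductive $\Q$-group of the desired real type, verifying the Deligne axioms for $(G,\B_r)$, and checking that a $CM$ point of the target Shimura variety indeed pulls back to a fiber whose $H^3$ has torus Mumford--Tate group; the sign conventions fixing the signature $(1,r)$ on $\Eig(\alpha,\eta)$ in the first step also require some care.
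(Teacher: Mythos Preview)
Your proposal is correct and follows essentially the same strategy as the paper: identify the centralizer of $\alpha$ inside the symplectic (similitude) group as a $\Q$-group whose real points form a unitary group of signature $(1,r)$, recognize the resulting period domain as the ball $\B_r$, observe that the Kuranishi period map is an open immersion into $\B_r$, and invoke density of special points for the associated Shimura datum.

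The only noteworthy difference is a packaging choice. The paper works with $C(\alpha)\subset\Sp(H^3(X,\Q),Q)$, passes to the adjoint group $C(\alpha)^{\ad}\cong\PU(1,r)$ to obtain the Shimura datum, and then must argue separately that a torus $\Hg(h^{\ad})$ forces the actual Hodge group $\Hg(H^3(X,\Q),h_X)$ to be a torus (via the observation that the derived group of $\ad^{-1}(\Hg(h^{\ad}))$ lands in the center). Your choice of $G=Z_{\GSp}(\alpha)$ with real form $\GU(1,r)$ and the Mumford--Tate group is slightly more direct: since $\MT$ already sits inside $G$, a special point of $(G,\B_r)$ immediately gives a torus $\MT$, with no adjoint detour. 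One small imprecision: your period map lands in the Hermitian symmetric domain $\B_r$, not in the Shimura variety itself; the density statement you need (and the one the paper uses) is density of $CM$ points in the domain $G(\R)/K$, which is what follows from the general result you cite.
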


We follow the conventions and notations of \cite{JCR}. Let $V$ be a $\Q$-vector space of finite dimension,
$$S^1 = {\rm Spec}(\R[x,y]/(x^2+y^2-1)) \  \ \mbox{with} \  \ S^1(\R) \cong \{z \in \C|z\bar z = 1\}$$
and $h: S^1 \to \GL(V_{\R})$ be a homomorphism of $\R$-algebraic groups. Each rational Hodge structure of some fixed weight $k$ is given by a pair $(V,h)$. The Hodge group $\Hg(V,h)$ is the smallest $\Q$-algebraic subgroup $G$ of $\GL(V_{\Q})$ such that $h(S^1) \subset G_{\R}$. Recall that a Calabi-Yau 3-manifold $X$ has $CM$, if $\Hg(H^3(X,\Q),h_X)$ is a torus algebraic group. For a characterization of Calabi-Yau 3-manifolds with $CM$ recall the following facts:

\begin{pkt} \label{jac}
The Calabi-Yau 3-manifold $X$ has the following intermediate Jacobians (see \cite{Bc}):
\begin{itemize}
\item The Griffiths intermediate Jacobian\index{intermediate Jacobian!Griffiths} ${\rm J}_G(X)$ is the complex torus corresponding to the Hodge structure of type $(1,0),(0,1)$ on $H^3(X,\Z)$ given by
$$H^{1,0} := H^{3,0}(X) \oplus H^{2,1}(X),  \  \ H^{0,1} := H^{1,2}(X) \oplus H^{0,3}(X) .$$
\item The Weil intermediate Jacobian \index{intermediate Jacobian!Weil}${\rm J}_W(X)$ is the abelian variety corresponding to the Hodge structure of type $(1,0),(0,1)$ on $H^3(X,\Z)$ given by
$$H^{1,0} := H^{2,1}(X) \oplus H^{0,3}(X),  \  \ H^{0,1} := H^{3,0}(X) \oplus H^{1,2}(X).$$
\end{itemize}
\end{pkt}

Calabi-Yau 3-manifolds with $CM$ are characterized by the following proposition:

\begin{proposition}
A Calabi-Yau 3-manifold $X$ has $CM$, if and only if the Hodge groups of the weight one Hodge structures corresponding to ${\rm J}_G(X)$ and ${\rm J}_W(X)$ are tori and commute.\footnote{Some authors write that a Calabi-Yau 3-manifold has $CM$, if and only if its Griffiths intermediate Jacobian has $CM$. For a proof they incorrectly quote the same article \cite{Bc}.}
\end{proposition}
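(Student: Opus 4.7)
The plan is to exploit a single identity in $\mathrm{Hom}(S^1,\GL(V_\R))$, where $V:=H^3(X,\Q)$. A direct computation on the Hodge pieces shows that
\[
h_X(z) \;=\; h_G(z)^2\, h_W(z)^{-1}\qquad\text{for all }z\in S^1(\R).
\]
Indeed, writing the exponents of $z$ on the four pieces $H^{3,0},H^{2,1},H^{1,2},H^{0,3}$, one finds $(3,1,-1,-3)$ for $h_X$, $(1,1,-1,-1)$ for $h_G$ (which acts by $z$ on $F^2=H^{3,0}\oplus H^{2,1}$ and by $z^{-1}$ on its complex conjugate), and $(-1,1,-1,1)$ for $h_W$ (which acts by $z$ on $H^{2,1}\oplus H^{0,3}$), and one checks $2(1,1,-1,-1)-(-1,1,-1,1)=(3,1,-1,-3)$.

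The implication $(\Leftarrow)$ is then almost formal. If $\Hg(V,h_G)$ and $\Hg(V,h_W)$ are commuting $\Q$-tori, then the $\Q$-algebraic subgroup of $\GL(V_\Q)$ they generate is again a $\Q$-torus $T$, and the displayed identity places $h_X(S^1)$ inside $T(\R)$. Since $\Hg(V,h_X)$ is by definition the smallest $\Q$-algebraic subgroup of $\GL(V_\Q)$ whose real points contain $h_X(S^1)$, we conclude $\Hg(V,h_X)\subseteq T$; in particular it is a torus and $X$ has $CM$.

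For $(\Rightarrow)$ I would assume that $\Hg(V,h_X)$ is a $\Q$-torus and decompose $V=\bigoplus_i V_i$ into simple rational Hodge substructures. Each $V_i$ is then an irreducible CM weight-$3$ Hodge structure, so its endomorphism algebra $E_i:=\mathrm{End}_{\mathrm{HS}}(V_i)$ is a CM field and $V_i$ is a free $E_i$-module of rank one; moreover the eigenline decomposition $V_{i,\C}=\bigoplus_{\sigma\colon E_i\hookrightarrow\C}V_{i,\sigma}$ refines the Hodge decomposition in the sense that each $V_{i,\sigma}$ lies in a single $H^{p_\sigma,q_\sigma}$. The subspaces $F^2V\cap V_{i,\C}$ and $(H^{2,1}\oplus H^{0,3})\cap V_{i,\C}$ are therefore unions of these $V_{i,\sigma}$'s, and in particular $E_i$-stable, so both the Griffiths and the Weil weight-one Hodge structures on $V_i$ are $E_i$-linear. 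Their cocharacters consequently factor through the torus $\mathrm{Res}_{E_i/\Q}\G_m\hookrightarrow\GL(V_i)$, and summing over $i$ embeds both $\Hg(V,h_G)$ and $\Hg(V,h_W)$ into the $\Q$-torus $\prod_i\mathrm{Res}_{E_i/\Q}\G_m$. In particular each of them is a torus, and they commute because they lie in a common abelian group.

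The main obstacle I expect is justifying the CM-type description used in the last paragraph: the assertion that an irreducible rational Hodge structure of CM type and weight $3$ is a free rank-one module over a CM field acting through a weight-$3$ CM type is the higher-weight analogue of the classical dictionary for CM abelian varieties. A careful argument would either verify it directly (using that $\Hg(V_i,h_X)$ is a $\Q$-subtorus of $\GL(V_i)$ acting $\Q$-irreducibly, so that its commutant is a number field, together with the inherited polarization on $V_i$ to identify this field as a CM field) or cite the corresponding material of \cite{JCR}.
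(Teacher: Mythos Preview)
The paper does not actually prove this proposition; its entire proof is the citation ``see \cite{Bc}, Theorem~$2.3$''. So there is no in-paper argument to compare against, and what you have written is a genuine supplement rather than a reconstruction.

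Your argument is correct in outline. The cocharacter identity $h_X(z)=h_G(z)^2 h_W(z)^{-1}$ on $S^1$ checks out on each Hodge piece, and the $(\Leftarrow)$ direction follows cleanly from it: the hypothesis that the two Hodge groups commute guarantees that the $\Q$-group they generate is a torus $T$, and the identity places $h_X(S^1)$ inside $T(\R)$. (It is worth remarking, for completeness, that $h_G(z)$ and $h_W(z)$ already commute in $\GL(V_\R)$ because both are diagonal on the Hodge decomposition, so the product is unambiguous.)

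For $(\Rightarrow)$ your strategy is the standard one and is essentially what Borcea does. The point you flag---that an irreducible rational Hodge structure with torus Hodge group is free of rank one over a CM field $E_i$ acting through a CM type---is indeed the crux. It follows from the representation theory of $\Q$-tori: an irreducible $\Q$-representation of a torus is one-dimensional over the field of definition of a character in the corresponding Galois orbit, and that field is the endomorphism algebra; the polarization then forces it to be CM. Once this is granted, each $H^{p,q}\cap V_{i,\C}$ is a sum of character lines $V_{i,\sigma}$, so both $F^2$ and $H^{2,1}\oplus H^{0,3}$ are $(E_i\otimes\C)$-stable on each summand, and $h_G$, $h_W$ factor through the common $\Q$-torus $\prod_i\mathrm{Res}_{E_i/\Q}\G_m$, giving both the torus property and the commutation. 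Citing \cite{Bc} or the relevant section of \cite{JCR} for the CM-type input, as you suggest, would make the write-up complete.
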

\begin{proof}
(see \cite{Bc}, Theorem $2.3$)
\end{proof}

The proof of Theorem $\ref{gugu}$ follows arguments and ideas similar to \cite{JCR}, Section $4.3$ and Section $4.4$. We use the theory of Shimura varieties, which is explicated in \cite{De2}, \cite{Dat}. The vector space automorphism $J:H^3(X,\C) \to H^3(X,\C)$ acting by $i$ on $F^2(X)$ and $-i$ on $\overline{ F^2(X)}$ satisfies $J(H^3(X,\R)) = H^3(X,\R)$. Thus $J$ is a complex structure on $H^3(X,\R)$, that means $J^2 = -\id$. Let $(H^3(X,\R),J)$ denote the resulting complex vector space. By
$$F^2(H^3(X,\C))\to (H^3(X,\R),J), \   \  v \to \tilde v = v +\bar v,$$
we have an isomorphism of complex vector spaces. The cup product yields an alternating form $Q$ on $H^3(X,\Q)$. By $Q$, we get the Hermitian form
\begin{equation} \label{defi}
H(\cdot,\cdot) = iQ(\cdot,\bar \cdot)
\end{equation}
on $H^3(X,\C)$ such that the Hodge decomposition is orthogonal with respect to $H$. As we have seen in Remark $\ref{haha}$ the maximal automorphism $\alpha$ acts by $\eta$ on $F^2(\sH^3)_B$ and by $\bar \eta$ on $\overline{F^2(\sH^3)_B}$, if the eigenvalue $\eta$ of the action of $\alpha$ on $H^{3,0}(X)$ is not real. Moreover let
$$C(\alpha)\subset \Sp(H^3(X,\Q),Q)$$
denote the centralizer of the action of $\alpha$ on $H^3(X,\Q)$.
Let for each $N \in \GL(F^2(H^3(X,\C)))$ the automorphism $\bar N \in \GL(\overline{F^2(H^3(X,\C))})$ be given by
$$\bar N \bar v = \overline{Nv} \  \  \   \ (\forall v \in F^2(H^3(X,\Q))).$$
We obtain the isomorphism $tr: \GL(F^2(H^3(X,\C))) \to \GL(H^3(X,\R),J)$ of $\C$-algebraic groups by
$$\GL(F^2(H^3(X,\C)))\ni N \to (N, \bar N) \in \GL(F^2(H^3(X,\C)))\times \GL(\overline{F^2(H^3(X,\C))}).$$

\begin{lemma}
$$C(\alpha)(\R) = \tr(\U(F^2(H^3(X,\C)),H|_{F^2(H^3(X,\C))})(\R))$$
\end{lemma}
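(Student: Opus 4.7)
The plan is to exploit the fact, established in Remark \ref{haha}, that $\alpha$ acts as the scalar $\eta$ on $F^2(H^3(X,\C))$ and as the scalar $\bar \eta$ on $\overline{F^2(H^3(X,\C))}$, so these are exactly the two full eigenspaces of $\alpha$ on $H^3(X,\C)$. Combined with the fact that both $F^2(H^3(X,\C))$ and its conjugate are Lagrangian with respect to $Q$ (first Hodge--Riemann bilinear relation for a weight-three polarized Hodge structure), this yields a concrete description of $C(\alpha)(\R)$ via the isomorphism $\tr$.

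First I would check that every $g \in C(\alpha)(\R)\subset \Sp(H^3(X,\R),Q)$ has the form $g = \tr(N)$ for a unique $N \in \GL(F^2(H^3(X,\C)))$. The relation $g\alpha = \alpha g$ on $H^3(X,\C)$ forces $g$ to preserve each eigenspace of $\alpha$, hence $g(F^2(H^3(X,\C)))\subset F^2(H^3(X,\C))$ and similarly for its conjugate. Because $g$ is defined over $\R$, it commutes with complex conjugation, so its restriction to $\overline{F^2(H^3(X,\C))}$ is the complex conjugate of its restriction $N$ to $F^2(H^3(X,\C))$; this is exactly the definition of $\tr(N)$.

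Next I would translate the symplectic condition into unitarity. With respect to the decomposition $H^3(X,\C) = F^2(H^3(X,\C)) \oplus \overline{F^2(H^3(X,\C))}$, the form $Q$ vanishes on each Lagrangian summand, so the preservation of $Q$ by $g = \tr(N)$ reduces to the single identity $Q(Nv,\bar N \bar w) = Q(v,\bar w)$ for all $v,w \in F^2(H^3(X,\C))$. Using $\bar N \bar w = \overline{Nw}$ together with the defining formula (\ref{defi}), which gives $Q(v,\bar w) = -iH(v,w)$, this identity is equivalent to $H(Nv,Nw) = H(v,w)$, i.e.\ $N \in \U(F^2(H^3(X,\C)), H|_{F^2(H^3(X,\C))})(\R)$.

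Conversely, for any such unitary $N$ the element $g := \tr(N)$ is real (it commutes with complex conjugation by construction), commutes with $\alpha$ (which is scalar on each eigenspace), and is symplectic (by reversing the computation above, using the $Q$-isotropy of $F^2$ and $\overline{F^2}$). This establishes the claimed equality. The main obstacle I expect is the Lagrangian property of $F^2(H^3(X,\C))$ under $Q$; once that is secured from the Hodge--Riemann relations, the rest of the argument is a direct unraveling of the definitions of $H$, $Q$, and $\tr$.
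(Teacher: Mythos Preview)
Your proposal is correct and follows essentially the same two-inclusion strategy as the paper: in both directions one uses that $F^2(H^3(X,\C))$ and its conjugate are precisely the $\eta$- and $\bar\eta$-eigenspaces of $\alpha$, and that the relation \eqref{defi} converts the symplectic constraint for $Q$ into the unitary constraint for $H$. If anything, your write-up is more explicit than the paper's, which simply invokes ``By \eqref{defi}'' at the key step; you correctly isolate the additional ingredient needed there, namely that $F^2$ and $\overline{F^2}$ are $Q$-isotropic (the first Hodge--Riemann relation), so that the symplectic condition on $\tr(N)$ collapses to the single mixed pairing.
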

\begin{proof}
Assume that $N \in \U(F^2(H^3(X,\C)),H|_{F^2(H^3(X,\C))})(\R)$. Then $tr(N)$ fixes the two eigenspaces $F^2(H^3(X,\C))$ and $\overline{F^2(H^3(X,\C))}$. Thus it commutes with $\alpha$. By $\eqref{defi}$, each
$$N \in \U(F^2(H^3(X,\C)),H|_{F^2(H^3(X,\C))})(\R)$$
satisfies that $\tr(N) \in \Sp(H^3(X,\Q),Q)(\R)$.
Thus
$$\tr(\U(F^2(H^3(X,\C)),H|_{F^2(H^3(X,\C))})(\R)) \subseteq C(\alpha)(\R).$$

Let $M \in C(\alpha)(\R)$. Thus $M$ fixes the two eigenspaces $F^2(H^3(X,\C))$ and $\overline{F^2(H^3(X,\C))}$. Since $M \in \Sp(H^3(X,\Q),Q)(\R)$, one obtains $M \in \tr(\U(F^2(H^3(X,\C)),H|_{F^2(H^3(X,\C))})(\R))$ by using $\eqref{defi}$.
Thus
$$C(\alpha)(\R) \subseteq \tr(\U(F^2(H^3(X,\C)),H|_{F^2(H^3(X,\C))})(\R)).$$
\end{proof}

Since $\alpha$ yields a Hodge isometry of $(H^3(X,\Q),h_X)$, one obtains
$$h(S^1) \subset C(\alpha)_{\R}.$$
The adjoint representation $h^{\ad}$ yields the homomorphism
$$S^1(\R) \to C(\alpha)^{\ad}(\R) \cong \PU(1,h^{2,1}(X))(\R) \ \ \mbox{given by} \  \ z \to [\diag(z,z^{-1},\ldots,z^{-1})]$$
and the centralizer $K$ of $h(S^1)$ in $C(\alpha)(\R)$ is isomorphic to $(\U(1) \times \U(h^{2,1}(X)))(\R)$. Thus $(C(\alpha)^{\ad},h^{\ad})$ is a Shimura datum, which yields the complex ball $\B_{h^{2,1}(X)}$.

Due to the fact that $F^2(\sH^3)_B$ is constant (follows from Remark $\ref{haha}$), the period map of the Kuranishi family $\sX \to B$ is given by the fractional period map
$$p_{F^2}:B \to \bP(F^2(H^3(X,\C))).$$
Since the period map of the Kuranishi family is injective (see \cite{Voi3}, Lemma $1.5$) and
$$h^{2,1}(X) = \dim \bP(F^2(H^3(X,\C))) = \dim B,$$
$p_{F^2}$ is open. Moreover each open subset of the period domain
$$C(\alpha)^{\ad}(\R)/\ad(K)\cong \B_{h^{2,1}(X)}$$
associated to the Shimura datum $(C(\alpha)^{\ad},h^{\ad})$ is given by an open set of $\bP(F^2(H^3(X,\C)))$. Thus we have an open map
$$B \to C(\alpha)^{\ad}(\R)/\ad(K)=C(\alpha)(\R)/K,$$
which assigns to each point $b\in B$ the Hodge structure on $H^3(\sX_b,\Q)$.

Let $\Hg(h^{\ad})$ be the smallest $\Q$-algebraic subgroup of $C(\alpha)^{\ad}$ with $h^{\ad}(S^1) \subset \Hg(h^{\ad})_{\R}$. Note that $\Hg(H^3(X,\Q),h_X)\subset \ad^{-1}(\Hg(h^{\ad}))$ and $\Hg(h^{\ad})$ is a torus, if and only if the derived group $(\ad^{-1}(\Hg(h^{\ad})))^{\der}$ is contained in the kernel of $\ad$ given by the center $Z(C(\alpha))$. Hence if $\Hg(h^{\ad})$ is a torus, $$\Hg^{\der}(H^3(X,\Q),h_X)\subseteq (\ad^{-1}(\Hg(h^{\ad})))^{\der}\subseteq Z(C(\alpha)).$$
Due to the fact that a reductive group is the almost direct product of its derived group and its center, this implies that the reductive group $\Hg(H^3(X,\Q),h_X)$ is a torus, if $\Hg(h^{\ad})$ is a torus. Since the set of points $h^{\ad}\in C(\alpha)^{\ad}(\R)/\ad(K)$ such that $\Hg(h^{\ad})$ is a torus is dense (follows from \cite{JCR}, Theorem $1.7.2$), we obtain Theorem $\ref{gugu}$.

\section{The cohomology actions of maximal automorphisms}

Let $X$ be a Calabi-Yau 3-manifold. In order to obtain all possible actions of maximal automorphism on $H^3(X,\C)$ we have to start with some general observations about Hodge structures on Calabi-Yau 3-manifolds and their endomorphism algebras. It is a well-known fact that one can decompose rational Hodge structures into simple rational Hodge structures, that are indecomposable rational Hodge structures. Let $(H_S^3(X,\Q),h_S)$ denote the simple rational sub-Hodge structure of $(H^3(X,\Q),h_X)$ which satisfies
$$H^{3,0}(X) \subset H_S^3(X,\C).$$

\begin{lemma} \label{NrField}
Sending $\gamma \in{\rm End}_{\Q}(H_S^3(X,\Q),h_S)$ to the eigenvalue of the action of $\gamma$ on $H^{3,0}(X)$ yields an isomorphism
$${\rm End}_{\Q}(H_S^3(X,\Q),h_S) \to \F \subset \C$$
of rings, where $\F$ is a number field.
\end{lemma}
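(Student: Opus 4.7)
The plan is to exhibit ${\rm End}_{\Q}(H_S^3(X,\Q),h_S)$ as a finite-dimensional commutative division algebra over $\Q$ by producing an injective $\Q$-algebra homomorphism from it into $\C$, given by the action on the line $H^{3,0}(X)$.

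First I would invoke the Schur-type lemma for simple rational Hodge structures: since $(H_S^3(X,\Q),h_S)$ is simple by definition, the kernel and image of any nonzero $\gamma\in{\rm End}_{\Q}(H_S^3(X,\Q),h_S)$ are rational sub-Hodge structures of $(H_S^3(X,\Q),h_S)$, which forces $\gamma$ to be an automorphism. Hence ${\rm End}_{\Q}(H_S^3(X,\Q),h_S)$ is a (a priori noncommutative) division algebra over $\Q$, and it is finite-dimensional because $H_S^3(X,\Q)$ is.

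Next, since $X$ is a Calabi-Yau threefold we have $h^{3,0}(X)=1$, and by hypothesis $H^{3,0}(X)\subset H_S^3(X,\C)$. Any morphism of rational Hodge structures preserves the Hodge decomposition, so every such $\gamma$ stabilises the line $H^{3,0}(X)$ and acts on it by a unique scalar $\lambda(\gamma)\in\C$. The map $\lambda:{\rm End}_{\Q}(H_S^3(X,\Q),h_S)\to\C$ is visibly a $\Q$-algebra homomorphism, and its kernel is a two-sided ideal of the division algebra; since $\lambda(\id)=1\neq 0$, this kernel is not the whole algebra, so it must vanish, and $\lambda$ is injective.

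Finally, the image $\F:=\lambda({\rm End}_{\Q}(H_S^3(X,\Q),h_S))$ is a subring of the commutative field $\C$ and is therefore commutative. Transporting this back through the isomorphism $\lambda$, the ring ${\rm End}_{\Q}(H_S^3(X,\Q),h_S)$ must itself be commutative, so as a commutative division algebra over $\Q$ it is a field; being finite-dimensional over $\Q$ it is a number field. The only step that genuinely requires thought is the Schur-type lemma, which is what converts the abstract endomorphism ring into a division algebra and thereby forces the evaluation map $\lambda$ to be injective; once that is in hand, the embedding into the commutative field $\C$ automatically collapses the structure into a number field.
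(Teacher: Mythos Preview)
Your argument is correct and is the standard route to this statement: Schur's lemma makes ${\rm End}_{\Q}(H_S^3(X,\Q),h_S)$ a finite-dimensional division algebra over $\Q$, and evaluation on the one-dimensional piece $H^{3,0}(X)$ embeds it into $\C$, forcing commutativity and hence exhibiting it as a number field. The paper itself gives no argument at all here; it simply refers the reader to Borcea's article \cite{Bc}, where essentially the same reasoning is carried out.
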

\begin{proof}
(see \cite{Bc})
\end{proof}

\begin{lemma} \label{wurzel}
Each automorphism $\alpha$ of $X$ acts on $H^{3,0}(X)$ by the multiplication with a root of unity.
\end{lemma}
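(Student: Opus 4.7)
The plan is to prove that $\lambda$, the scalar by which $\alpha^*$ acts on the one-dimensional space $H^{3,0}(X)$, is an algebraic integer all of whose Galois conjugates have absolute value $1$; Kronecker's theorem on algebraic integers will then give the conclusion. Algebraicity is automatic: $\alpha^*$ preserves the lattice $H^3(X,\Z)$ and thus has integer characteristic polynomial, of which $\lambda$ is a root. To control the conjugates I would first observe that $\alpha^*$ preserves the sub-Hodge structure $H_S^3(X,\Q)$, since it respects the isotypic decomposition of $H^3(X,\Q)$ and sends $H^{3,0}(X)$ to itself. The restriction $\alpha^*|_{H_S^3}$ is therefore an element $\tilde\alpha$ of ${\rm End}_\Q(H_S^3(X,\Q),h_S)\cong \F$, and under the distinguished embedding $\sigma_0:\F\hookrightarrow\C$ of Lemma~\ref{NrField} one has $\sigma_0(\tilde\alpha)=\lambda$. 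The Galois conjugates of $\lambda$ are then precisely the values $\sigma(\tilde\alpha)$ as $\sigma$ ranges over the complex embeddings of $\F$.

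The polarization provides the final ingredient. Writing $H(v,w) := iQ(v,\bar w)$ for the Hermitian form derived from the cup product $Q$, the Riemann bilinear relations for a polarized weight-$3$ Hodge structure make $H$ definite on every Hodge piece $H^{p,q}(X)$, and $\alpha^*$ preserves $H$ because it preserves $Q$. The action of $\F$ on $H_S^3(X,\C)$ decomposes it into eigenspaces $V_\sigma$, one per embedding $\sigma$; since $\F$ consists of Hodge endomorphisms, this decomposition refines the Hodge decomposition into $V_\sigma = \bigoplus_{p+q=3}(V_\sigma \cap H^{p,q})$. For each $\sigma$ at least one summand $V_\sigma \cap H^{p,q}$ is non-zero, so one may pick a non-zero $v$ in such a summand; then $H(v,v)\neq 0$ by definiteness of $H$ on $H^{p,q}$, while $\alpha^*$-invariance of $H$ gives $H(\tilde\alpha v,\tilde\alpha v) = |\sigma(\tilde\alpha)|^2 H(v,v) = H(v,v)$, forcing $|\sigma(\tilde\alpha)|=1$. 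Thus $\tilde\alpha$ is an algebraic integer all of whose Galois conjugates lie on the unit circle; Kronecker's theorem yields that it is a root of unity in $\F$, and hence so is $\lambda=\sigma_0(\tilde\alpha)$.

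The main point requiring care is that the $\F$-eigenspace decomposition genuinely refines the Hodge decomposition, which comes from the defining property that elements of $\F$ are morphisms of Hodge structures and hence preserve every $H^{p,q}$; and that $\alpha^*$ actually preserves $H_S^3(X,\Q)$ as a subspace, which holds because $H_S^3$ must be the $H^{3,0}$-containing isotypic component (its multiplicity being forced to one by the fact that the endomorphism algebra in Lemma~\ref{NrField} is a field rather than a matrix algebra). With these in place, the argument reduces cleanly to Kronecker's theorem applied to the algebraic integer $\tilde\alpha\in\F$.
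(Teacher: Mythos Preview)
Your argument is correct and in fact fills a step the paper leaves implicit. The paper's own proof is shorter: it observes that $\alpha^*$ acts on $H^{0,3}(X)$ both by $\bar\eta$ (since $\alpha^*$ is real) and by $\eta^{-1}$ (since $\alpha^*$ preserves the cup-product pairing $Q\colon H^{3,0}\times H^{0,3}\to\C$), whence $|\eta|=1$; it then invokes Lemma~\ref{NrField} to place $\eta$ in the number field $\F$ and simply asserts that $\eta$ is therefore a root of unity. That last assertion needs exactly what you supply, namely that \emph{every} archimedean absolute value of $\tilde\alpha\in\F$ equals $1$ (not just the distinguished one) together with Kronecker's theorem; your eigenspace decomposition $H_S^3(X,\C)=\bigoplus_\sigma V_\sigma$ and the definiteness of $H$ on each $H^{p,q}$ give this cleanly. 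In effect the paper's computation is the special case $\sigma=\sigma_0$, $V_{\sigma_0}\cap H^{3,0}\neq 0$ of your general one. One minor quibble: the reason $H_S^3$ coincides with its isotypic component (so that $\alpha^*$ must preserve it) is most directly seen from $h^{3,0}(X)=1$, since the $(3,0)$-piece of $S^{\oplus m}$ has dimension $m\cdot\dim S^{3,0}$ and hence $m=1$; the endomorphism algebra being a field is a consequence of this rather than the reason for it.
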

\begin{proof}
By the assumptions, $\dim H^{3,0}(X) = 1$ and $\alpha(H^{3,0}(X)) = H^{3,0}(X)$. Thus $\alpha$ acts by the multiplication with an eigenvalue $\eta$ on $H^{3,0}(X)$. Since $\alpha$ fixes $H^3(X,\R)$, it acts by $\bar \eta$ on $H^{0,3}(X)$. Moreover $\alpha$ respects the polarization on $X$. Thus one has also that $\alpha$ acts by $\eta^{-1}$ on $H^{0,3}(X)$. Hence
$$\eta^{-1} = \bar \eta \Leftrightarrow |\eta| = 1.$$
Since the action of $\alpha$ on $H_S^3(X,\Q)$ is $\Q$-rational and commutes with $h_S(S^1)$, the action of $\alpha$ yields an element of ${\rm End}_{\Q}(H_S^3(X,\Q),h_S)$. By Lemma $\ref{NrField}$, this implies that $\eta$ is contained in a number field $\F$. Thus $\eta$ is a root of unity.
\end{proof}

\begin{pkt} \label{nn}
Now consider a not necessarily maximal automorphism $\alpha$ of $X$, which does not act trivially on $H^3(X,\C)$ and satisfies that $\alpha^p$ acts trivially on $H^3(X,\C)$ for some prime number $p> 2$. The minimal polynomial of the action of $\alpha$ on $H^3(X,\Q)$ divides
$$x^p-1 = (x-1)(x^{p-1}+ \ldots x +1).$$
Thus
$$H^3(X,\Q) = {\rm Eig}_{\Q}(\alpha,1) \oplus N^3(X,\Q),$$
where $N^3(X,\Q)$ is denotes the subspace of $H^3(X,\Q)$, on which $\alpha$ acts as an automorphism with minimal polynomial $x^{p-1}+ \ldots x +1$.
Let $\eta$ be a primitive $p$-th. root of unity and consider
$$N^3(X,\Q(\eta)) = N^3(X,\Q)\otimes \Q(\eta).$$
The action of $\alpha$ yields a decomposition of $N^3(X,\Q(\eta))$ into the eigenspaces ${\rm Eig}_{\Q(\eta)}(\alpha,\eta^r)$ with $r = 1, \ldots, p-1$. Let $\gamma \in \Gal(\Q(\eta),\Q)$. Since the action of $\alpha$ on $H^3(X,\Q(\eta))$ is defined by rational matrices, $\alpha$
and $\gamma$ commute. Hence for $v \in {\rm Eig}_{\Q(\eta)}(\alpha,\eta^r)$ one has
$$\gamma(\eta^r)\gamma(v) = \gamma(\eta^r v) = (\gamma\circ \alpha)(v) = \alpha(\gamma(v)).$$
Thus all eigenspaces ${\rm Eig}_{\Q(\eta)}(\alpha,\gamma(\eta^r))$ have the same dimension $d$. Since there are $p-1$ primitive $p$-th. roots of unity, one has
$$\dim N^3(X,\Q) = d \cdot (p-1).$$
\end{pkt}

\begin{Theorem} \label{hoho}
Assume that a maximal automorphism $\alpha$ of $X$ does not act by $\pm 1$ on $H^{3}(X,\C)$. Then there exists a root of unity $\eta\neq \pm 1$ such that
$$F^2(H^3(X,\C)) = {\rm Eig}(\alpha,\eta) \ \ \mbox{and} \ \ \overline{F^2(H^3(X,\C))} ={\rm Eig}(\alpha,\bar \eta),$$
$F^2(\sH^3)_B$ is constant, the Kuranishi family has a dense set of $CM$ fibers and $X$ cannot occur as fiber of a maximal family with maximally unipotent monodromy. 
\end{Theorem}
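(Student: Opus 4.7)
The plan is to reduce the entire theorem to Remark~\ref{haha} and Theorem~\ref{gugu}, both of which contain all four conclusions already, but under the apparently stronger hypothesis that $\alpha$ acts on $H^{3,0}(X)$ by a \emph{non-real} eigenvalue. The bridge is Lemma~\ref{wurzel}, which guarantees that $\alpha$ acts on the line $H^{3,0}(X)$ by multiplication by some root of unity $\eta$; so the essential step is to upgrade the hypothesis ``$\alpha$ does not act by $\pm 1$ on $H^{3}(X,\C)$'' into the statement $\eta \neq \pm 1$.

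Suppose, for contradiction, that $\eta \in \{1,-1\}$. The first half of the argument in Remark~\ref{haha}, which combines the parallelism of the eigenspaces of $\alpha$ under the Gauss-Manin connection with the observation that the derivatives of a local generator $\omega$ of $F^{3}(\sH^{3})_{B}$ span $F^{2}(\sH^{3})_{b}$ together with $\omega(b)$, produces the inclusion $F^{2}(H^{3}(X,\C)) \subset \Eig(\alpha,\eta)$ without using any reality hypothesis on $\eta$ whatsoever. Since $\alpha$ is defined over $\Q$, complex conjugation sends $\Eig(\alpha,\eta)$ to $\Eig(\alpha,\bar\eta)$; as $\eta = \pm 1$ is real this is the same eigenspace, so we also get $\overline{F^{2}(H^{3}(X,\C))} \subset \Eig(\alpha,\eta)$. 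But on a Calabi-Yau $3$-manifold $H^{3}(X,\C) = F^{2}(H^{3}(X,\C))\oplus\overline{F^{2}(H^{3}(X,\C))}$, so $\alpha$ would act by $\eta = \pm 1$ on the whole of $H^{3}(X,\C)$, contradicting the hypothesis. Hence $\eta$ is a non-real root of unity.

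With $\eta \neq \pm 1$ in hand, the full conclusion of Remark~\ref{haha} applies verbatim: it gives the eigenspace identifications $F^{2}(H^{3}(X,\C)) = \Eig(\alpha,\eta)$ and $\overline{F^{2}(H^{3}(X,\C))} = \Eig(\alpha,\bar\eta)$, the constancy of $F^{2}(\sH^{3})_{B}$, and, via Theorem~\ref{eigen} together with the discussion of maximally unipotent monodromy that follows it, the fact that $X$ cannot be a fiber of any maximal family with maximally unipotent monodromy. The density of $CM$ fibers in the Kuranishi family is then exactly the content of Theorem~\ref{gugu} applied to the same non-real $\eta$. The only genuinely new step is the dichotomy $\eta \neq \pm 1$; everything else is bookkeeping of results already proved in Sections~2 and~3, and I do not anticipate a serious obstacle.
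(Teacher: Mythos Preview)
Your proof is correct and follows essentially the same route as the paper: invoke Lemma~\ref{wurzel} to get that $\eta$ is a root of unity, use the eigenspace inclusion $F^{2}\subset\Eig(\alpha,\eta)$ from Remark~\ref{haha} together with complex conjugation to force $\eta\neq\pm1$, and then defer everything else to Remark~\ref{haha} and Theorem~\ref{gugu}. The only difference is that the paper asserts the implication ``$\alpha$ does not act by $\pm1$ on $H^{3}(X,\C)\Rightarrow\eta\neq\pm1$'' in one line, whereas you spell out the contrapositive explicitly.
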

\begin{proof}
By Lemma $\ref{wurzel}$, the eigenvalue $\eta$ of the action of the maximal automorphism $\alpha$ on $H^{3,0}(X)$ is a root of unity. In Remark $\ref{haha}$ we have seen that $\alpha$ acts by $\eta$ on $F^2(H^3(X,\C))$. Moreover we have that $\alpha$ acts by $\bar \eta$ on $\overline{F^2(H^3(X,\C))}$, since the action of $\alpha$ is defined over $\Q$. By the assumption that $\alpha$ does not act by $\pm 1$ on $H^{3}(X,\C)$, one concludes $\eta\neq \pm 1$. Hence the root of unity $\eta$ is not real and the rest of the theorem follows from the discussion in Remark $\ref{haha}$ and Theorem $\ref{gugu}$.
\end{proof}

\begin{lemma}
Assume that the maximal automorphism $\alpha$ does not act by $\pm 1$ on $H^3(X,\C)$ and $\alpha^p$ acts trivially on $H^3(X,\C)$ for some prime number $p$. Then $p = 3$.
\end{lemma}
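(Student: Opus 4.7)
The plan is to combine Theorem \ref{hoho}, which pins down exactly which eigenvalues of $\alpha$ can occur on $H^3(X,\C)$, with the Galois equidistribution of eigenspace dimensions established in \ref{nn}. Once both ingredients are in place, the conclusion reduces to a one-line dimension count.

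First I would dispose of $p = 2$: if $p = 2$ then $\alpha^2 = \id$ on $H^3(X,\C)$, so the eigenvalue $\eta$ of $\alpha$ on the line $H^{3,0}(X)$ satisfies $\eta^2 = 1$ and hence $\eta = \pm 1$. But Theorem \ref{hoho} (whose hypotheses are precisely those of the present lemma) asserts $\eta \neq \pm 1$, a contradiction. So from now on $p$ is an odd prime, and $\eta$ is a primitive $p$-th root of unity (it is a $p$-th root of unity because $\alpha^p = \id$, and it is primitive because $p$ is prime and $\eta \neq 1$).

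Next I would exploit Theorem \ref{hoho} in its full strength: it gives
$$H^3(X,\C) = {\rm Eig}(\alpha,\eta) \oplus {\rm Eig}(\alpha,\bar\eta) = F^2(H^3(X,\C)) \oplus \overline{F^2(H^3(X,\C))},$$
so that $\alpha$ has \emph{only} the two eigenvalues $\eta$ and $\bar\eta$ on $H^3(X,\C)$. In particular the fixed subspace ${\rm Eig}(\alpha,1)$ is zero, so in the notation of \ref{nn} we have $N^3(X,\Q) = H^3(X,\Q)$, and
$$\dim N^3(X,\Q) = b_3(X) = 2\dim F^2(H^3(X,\C)).$$
On the other hand, \ref{nn} shows that the Galois group $\Gal(\Q(\eta)/\Q)$ permutes the eigenspaces ${\rm Eig}_{\Q(\eta)}(\alpha,\eta^r)$, $r = 1, \ldots, p-1$, transitively, so they all share the same dimension
$$d = \dim {\rm Eig}(\alpha,\eta) = \dim F^2(H^3(X,\C)) \geq 1,$$
where $d \geq 1$ because $H^{3,0}(X) \subset {\rm Eig}(\alpha,\eta)$. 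Consequently $\dim N^3(X,\Q) = d(p-1)$.

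Equating the two expressions gives $d(p-1) = 2d$, and since $d \geq 1$ we conclude $p - 1 = 2$, i.e.\ $p = 3$. There is no real obstacle here: the only conceptual point is recognizing that Theorem \ref{hoho} forces the fixed part ${\rm Eig}(\alpha,1)$ to vanish and leaves only two nontrivial eigenspaces, after which the Galois transitivity of \ref{nn} mechanically rules out every prime larger than $3$.
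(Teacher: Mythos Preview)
Your proof is correct and follows essentially the same route as the paper's: invoke Theorem \ref{hoho} to get ${\rm Eig}(\alpha,1)=0$ and $d=\dim F^2(H^3(X,\C))=b_3(X)/2$, invoke \ref{nn} to get $b_3(X)=d(p-1)$, and equate. The only difference is that you treat the case $p=2$ explicitly, whereas the paper tacitly relies on the standing assumption $p>2$ in \ref{nn}; your handling of this edge case is a harmless (and arguably cleaner) addition.
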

\begin{proof}
Since $\alpha$ does not act by $\pm 1$ on $H^{3}(X,\C)$, we conclude from Theorem $\ref{hoho}$ that
$$d = \dim F^2(H^3(X,\C)) = \frac{b_3(X)}{2} \  \ \mbox{and} \  \ {\rm Eig}_{\Q}(\alpha,1) = 0.$$
Thus:
$$\frac{b_3(X)}{2}= d = \frac{b_3(X)}{p-1} \Rightarrow p = 3$$
\end{proof}

From Theorem $\ref{hoho}$ one concludes that each maximal automorphism $\alpha$ has a smallest positive integer $m$ such that $\alpha^m$ acts trivially on $H^3(X,\C)$. Hence:

\begin{corollary}
Assume that the maximal automorphism $\alpha$ does not act trivially on $H^3(X,\C)$. Then there exists an $m$, which has only the prime divisors 2 and $3$, such that $\alpha^m$ acts trivially on $H^3(X,\C)$.
\end{corollary}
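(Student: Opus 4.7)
The plan is to take $m$ to be the smallest positive integer such that $\alpha^m$ acts trivially on $H^3(X,\C)$ (whose existence is noted immediately before the corollary) and to show that every prime divisor $p$ of $m$ lies in $\{2,3\}$. Fix such a prime $p\mid m$ and form $\beta := \alpha^{m/p}$. Since any extension of $\alpha$ to the local universal deformation $\sX\to B$ yields, upon raising to the $(m/p)$-th power, an extension of $\beta$, the automorphism $\beta$ is again maximal. By the minimality of $m$ the automorphism $\beta$ does not act trivially on $H^3(X,\C)$, while $\beta^p=\alpha^m$ does.

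I would then split into cases according to the action of $\beta$ on $H^3(X,\C)$. If $\beta$ acts as $+\id$ on $H^3(X,\C)$, this contradicts the minimality of $m$, since $m/p<m$. If $\beta$ acts as $-\id$, then $\beta^p=(-1)^p\cdot\id$ is trivial only for $p=2$, so $p\in\{2,3\}$ in this case. Finally, if $\beta$ acts neither as $+\id$ nor as $-\id$ on $H^3(X,\C)$, then $\beta$ is a maximal automorphism satisfying the hypotheses of the preceding lemma (not acting by $\pm\id$, with $\beta^p$ trivial on $H^3(X,\C)$), so that lemma forces $p=3$. In every case $p\in\{2,3\}$, so $m$ has only the prime divisors $2$ and $3$.

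There is no real obstacle here: the whole argument is a short reduction from the prime-order statement of the previous lemma to the general order $m$ via the substitution $\beta=\alpha^{m/p}$. The only point that deserves to be explicitly mentioned is that powers of maximal automorphisms are again maximal, which is immediate from the definition. The key idea is simply that choosing $\beta$ this way turns the problem about the order of $\alpha$ into finitely many problems of prime-power type that have already been resolved.
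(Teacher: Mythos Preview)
Your argument is correct and is precisely the natural way to flesh out the paper's ``Hence:'' --- the paper states the corollary without proof, and your reduction via $\beta=\alpha^{m/p}$ to the preceding prime-order lemma is exactly the intended step. The only detail you add beyond the paper is the explicit observation that powers of maximal automorphisms are again maximal, which is indeed immediate from the definition.
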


Assume that 9 is the smallest positive integer $m$ such that $\alpha^m$ acts trivially on $H^3(X,\C)$. Then the minimal polynomial divides
$$x^9-1 = (x^6+x^3+1)(x^2+x+1)(x-1)$$
and there exists a subspace $N^3(X,\Q)\subset H^3(X,\Q)$ such that the restriction of the action of $\alpha$ to $N^3(X,\Q)$ has the minimal polynomial $x^6+x^3+1$. By the same arguments as in $\ref{nn}$, the vectorspace $N^3(X,\Q)\otimes \C$ decomposes into 6 eigenspaces with the same dimension with respect to the 6 primitive 9-th.  roots unity. Since the action of a maximal automorphism on $H^3(X,\C)$ yields atmost two eigenspaces, there does not exist a maximal automorphism $\alpha$ with $m=9$.
Moreover there are 4  primitive 8-th. roots of unity and 4 primitive 12-th. roots of unity. Thus we conclude:

\begin{Theorem}
The group of maximal automorphisms of $X$ is up to the subgroup of automorphisms acting trivially on $H^3(X,\C)$ given by
$$\{e\}, \  \ \Z/(2), \  \ \Z/(3), \  \ \Z/(4) \   \ \mbox{or} \  \ \Z/(6).$$
\end{Theorem}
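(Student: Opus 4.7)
My plan is to identify the quotient $G$ of the group of maximal automorphisms of $X$ by the normal subgroup $K$ of those acting trivially on $H^3(X,\C)$ with a finite cyclic subgroup of $\C^*$, and then to pin down the admissible orders. By Lemma \ref{wurzel}, every automorphism of $X$ acts on the one-dimensional space $H^{3,0}(X)$ by multiplication with a root of unity, so sending $[\alpha]$ to the eigenvalue $\eta_\alpha$ of $\alpha$ on $H^{3,0}(X)$ gives a well-defined group homomorphism $\chi\colon G\to\C^*$.

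First I would verify that $\chi$ is injective. Suppose $\chi([\alpha])=1$. If $\alpha$ does not act by $\pm\id$ on $H^3(X,\C)$, then Theorem \ref{hoho} forces $\eta_\alpha$ to be a non-real root of unity, contradicting $\eta_\alpha=1$; and if $\alpha=-\id$ on $H^3(X,\C)$, then $\eta_\alpha=-1$, again a contradiction. Hence $\alpha$ acts as the identity on $H^3(X,\C)$, i.e.\ $\alpha\in K$, so $\chi$ is injective. This realizes $G$ as a finite subgroup of $\C^*$, so $G\cong\Z/m$ for some integer $m\ge 1$.

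Next I would determine the possible values of $m$. By the preceding Corollary, every element of $G$ has order of the form $2^a3^b$, and since $G$ is cyclic this forces $m=2^a3^b$ as well. Pick a generator $[\alpha]$ of $G$, so that $\eta_\alpha$ is a primitive $m$-th root of unity. Because $\alpha$ acts on $H^3(X,\Q)$ by rational matrices, the set of eigenvalues of $\alpha$ on $H^3(X,\C)$ is $\Gal(\bar\Q/\Q)$-stable and therefore contains all $\varphi(m)$ primitive $m$-th roots of unity. For $m\ge 3$ the generator $\alpha$ cannot act by $\pm\id$ on $H^3(X,\C)$, so Theorem \ref{hoho} applies and says that the only eigenvalues of $\alpha$ on $H^3(X,\C)$ are $\eta_\alpha$ and $\bar\eta_\alpha$. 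Hence $\varphi(m)\le 2$, which combined with $m=2^a3^b$ yields exactly $m\in\{1,2,3,4,6\}$.

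The step I expect to require the most care is the injectivity of $\chi$, since the cases $\alpha=\pm\id$ on $H^3(X,\C)$ fall outside the hypothesis of Theorem \ref{hoho} and must be treated by hand; but both cases are handled by the elementary contradictions above, and the small-order groups $\{e\}$ and $\Z/2$ arising from them are already in the list. Once injectivity is in place, the rest is bookkeeping: the cyclicity of $G$, together with the Galois-theoretic constraint $\varphi(m)\le 2$ and $m=2^a3^b$, immediately reduces to the five cyclic groups claimed.
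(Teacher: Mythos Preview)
Your proof is correct and follows essentially the same strategy as the paper: the key inputs are Theorem~\ref{hoho} (only the eigenvalues $\eta,\bar\eta$ occur), the Corollary (orders are $2^a3^b$), and the Galois constraint that all primitive $m$-th roots of unity must appear as eigenvalues. Your explicit introduction of the character $\chi\colon G\to\C^*$ and the verification of its injectivity make the cyclicity of $G$ transparent, whereas the paper leaves this step implicit; likewise your single inequality $\varphi(m)\le 2$ is a tidier formulation of the paper's case-by-case elimination of $m=8,9,12$.
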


\section*{Acknowledgments}
This paper was written at the Graduiertenkolleg ``Analysis, Geometry and Stringtheory'' at Leibniz Universit\"at Hannover. I would like to thank Klaus Hulek very much for many stimulating discussions and his hint to the article \cite{CH}, which yields the construction method of the examples in Section 1. Moreover I would like to thank Lars Halle and Derek Harland for discussing the arguments in Section 2.

\end{document}